\documentclass[11pt, reqno]{amsart}%
\usepackage{amsmath,amsfonts,amssymb,amsthm,color,cancel}
\usepackage{epsfig}
\usepackage[normalem]{ulem}

\textwidth148mm \textheight218mm \hoffset-10mm \voffset-8mm

\usepackage{bbm}
\usepackage{enumerate}
\usepackage{amstext}
\usepackage{mathrsfs}
\usepackage{xspace}
\usepackage{amsfonts}
\usepackage{amsmath}
\usepackage{amssymb}
\usepackage{amstext}
\usepackage{amsthm}       
\usepackage{xspace}
\usepackage[active]{srcltx}
\usepackage{cite}


%
%

\pagestyle{plain}

\newcommand{\dprod}[2]{\langle #1,#2\rangle}

\newcommand{\D}[1]{\mbox{\rm #1}}
\newcommand{\cal}{\mathcal}

\newcommand{\F}{\mathcal F}

\newcommand{\R}{\mathbb R}

\newcommand{\Lip}{\D{Lip}}

\newcommand{\N}{\mathbb N}

\newcommand{\PP}{\mathbb{P}}

\newcommand{\Q}{\mathbb Q}

\newcommand{\Tr}{\D{tr}}
\newcommand{\Z}{\mathbb Z}

\newcommand{\comp}{\mbox{\scriptsize  $\circ$}}
\newcommand{\eps}{\varepsilon}
\renewcommand{\epsilon}{\varepsilon}


\newcommand{\cyl}{(0,+\infty)\times\R^d}
\newcommand{\ccyl}{[0,+\infty)\times\R^d}

\newcommand{\cTcyl}{[0,T]\times\R^d}

\newcommand{\Bor}{\mathscr{B}}

\newcommand{\K}{\mathcal{K}}

\newcommand{\tagliato}{$\kern-5 mm -$}
\newcommand{\tagliat}{$\kern-4 mm -$}

\newcommand{\dd}{\D{d}}


\newcommand{\ucv}{\rightrightarrows_{\text{\tiny loc}}}


\newcommand{\Ham}{\mathcal{H}(\gamma,\alpha_0,\beta_0)}
\newcommand{\Hamtilde}{\mathcal{H}(\gamma,\tilde\alpha_0,\tilde\beta_0)}

\newtheorem{teorema}{Theorem}[section]
\newtheorem{prop}[teorema]{Proposition}
\newtheorem{lemma}[teorema]{Lemma}
\newtheorem{definition}[teorema]{Definition}
\newtheorem{cor}[teorema]{Corollary}

\newtheorem{guess}[teorema]{Remark}
\newtheorem{example}[teorema]{Example}

\newenvironment{oss}{\begin{guess} \begin{rm}}{\end{rm} \end{guess}}
\newenvironment{definizione}{\begin{definition} \begin{rm}}{\end{rm}
\end{definition}}
\begin{document}

\title{Homogenization of viscous and non-viscous\\ HJ equations: a remark and an application}
\author{Andrea Davini \and Elena Kosygina}
\address{Dip. di Matematica, {Sapienza} Universit\`a di Roma,
P.le Aldo Moro 2, 00185 Roma, Italy}

\email{davini@mat.uniroma1.it}
\address{Department of Mathematics, Baruch College, One Bernard Baruch Way, Box B-630, New York, NY 10010, USA}

\email{elena.kosygina@baruch.cuny.edu}
\keywords{Homogenization, equations in media with random structure, nonconvex Hamilton-Jacobi equation.}
\subjclass[2010]{35B27, 60K37, 35D40.}

\begin{abstract}
  It was pointed out {by P.-L.\ Lions, G.\ Papanicolaou, and S.R.S.\
    Varadhan in their seminal paper} \cite{LPV} that, for first order
  Hamilton-Jacobi (HJ) equations, homogenization starting with affine
  initial data implies homogenization for general uniformly continuous
  initial data. The argument makes use of some properties of the HJ
  semi-group, in particular, the finite speed of propagation. The last
  property is lost for viscous HJ equations.  In this paper we prove
  the above mentioned implication in both viscous and non-viscous
  cases.  Our proof relies on a variant of Evans's perturbed test
  function method.  As an application, we show homogenization in the
  stationary ergodic setting for viscous and non-viscous HJ equations
  in one space dimension with non-convex Hamiltonians of specific
  form.  The results are new in the viscous case.
 \end{abstract}
\date{\today}
\maketitle
%

\section{Introduction}
Consider a family of equations of the form 
\begin{equation}\label{intro viscous hj}\tag{HJ$_\eps$}
{\partial_t u^\eps}-\eps\,\Tr\left(A\left(\frac{x}{\eps}\right)D_x^2u^\eps\right)+H\left(\frac x \eps, D_x u^\eps\right)=0\quad \hbox{in $ (0,+\infty)\times \R^d$,}
\end{equation}
where { $\epsilon>0$,} $A$ is a $d\times d$ symmetric and
  positive semi-definite matrix with Lipschitz and bounded
  coefficients, and $H$, the Hamiltonian, is a continuous function on
  $\R^{d}\times\R^d$, coercive in the gradient variable, uniformly
  with respect to $x$. If $A\not\equiv 0$, we shall refer to
  \eqref{intro viscous hj} as viscous Hamilton-Jacobi equation.

Under suitable assumptions on $H$, equation \eqref{intro
  viscous hj} satisfies a comparison principle, yielding the existence
of a unique viscosity solution $u^\eps$ (in a proper class of
continuous functions) subject to the initial condition $u^\epsilon(0,\cdot)=g$ in $\R^d$, with $g$ uniformly 
continuous in $\R^d$. 
We shall say that {\em the equation \eqref{intro viscous hj} homogenizes} if
there exists a continuous function $\overline H:\R^d\to\R$ such that
$u^\eps$ converges, locally uniformly in $\ccyl$ as $\eps\to 0^+$, to
the unique viscosity solution $\overline u$ of the following {\em
  effective equation}
\begin{equation}\label{intro eq effective}
\partial_t\overline u+\overline H(D_x\overline u)=0\qquad\hbox{in $\cyl$}
\end{equation}
satisfying $\overline u(0,\cdot)=g$ for every uniformly continuous function $g$ on $\R^d$. 

The study of homogenization of Hamilton-Jacobi equations was
initiated by P.-L.\ Lions, G.\ Papanicolaou, and S.\,R.\,S.\ Varadhan
around  1987. Their seminal paper \cite{LPV} was concerned with
homogenization of  first order Hamilton-Jacobi equations in the
periodic setting, i.\ e.\ when $A\equiv 0$ and $H(\cdot+z,\cdot)\equiv H(\cdot,\cdot)$ for all
$z\in\Z^d$. In particular, Section I.2 of \cite{LPV} explains why, 
for first order Hamilton-Jacobi equations, 
homogenization for linear initial data implies homogenization for general uniformly continuous initial
data. The outline of the proof provided in \cite{LPV} uses characterization results
for strongly continuous semi-groups on $\D{UC}(\ccyl)$, see
\cite{L84,LN}, as well as a uniform (in $\eps$) speed of propagation
for the semigroup generated by the Cauchy problem associated to
\eqref{intro viscous hj}, which holds true since $A\equiv 0$.

In this paper, we give a proof of this fact for both viscous and
non-viscous Hamilton-Jacobi equations under a quite general set of
assumptions on the equation \eqref{intro viscous hj}.  More precisely,
we will prove the following result (Theorem \ref{app teo
    hom}): assume that
\begin{equation}\label{intro linear data}
u^\eps_\theta(t,x)\underset{\eps\to 0^+}{\to} \langle \theta,x\rangle-t\overline
H(\theta)\qquad\hbox{locally uniformly in $[0,+\infty)\times \R^d$} 
\end{equation}
for every $\theta\in\R^d$ and for some continuous and coercive
function $\overline H:\R^d\to\R$, where $u^\eps_\theta$ is the
solution to \eqref{intro viscous hj} with initial datum
$u^\eps_\theta(0,x)=\langle \theta,x\rangle$.\footnote{When $\eps=1$,
  we shall simply write $u_\theta$ in place of $u^1_\theta$.}  Then
equation \eqref{intro viscous hj} homogenizes. Note that, if
homogenization takes place, then the effective Hamiltonian is
completely characterized in terms of the limit of
$u^\epsilon_\theta(1,0)=\eps u_\theta(1/\eps,0)$ as $\eps\to 0^+$,
with equality holding due to the identity
$u^\eps_\theta(t,x)=\eps\, u_\theta(t/\eps,x/\eps)$ on $\ccyl$.  Our
proof relies on a variant of the elegant and powerful perturbed test
function method due to L.\,C.\ Evans \cite{Evans89}, where the test
function is perturbed { by} a term of the form
$\epsilon v_\theta(t/\epsilon,x/\epsilon)$ with
$v_\theta(t,x):=u_\theta(t,x)-\langle
\theta,x\rangle+t\overline{H}(\theta)$
for a proper $\theta\in\R^d$. We recall that the standard
{ homogenization} approach consists in choosing as
$v_\theta$ a (time-independent) sub-linear solution of the cell
problem
\[
-\Tr\left(A\left({x}\right)D_x^2 v\right)+
H(x,\theta+D_x v)=\overline H(\theta)\quad\hbox{in $\R^d$},
\]
also known as {\em (exact) corrector}. In the periodic setting,
{  correctors}  always exist
and are, { moreover}, periodic, but in more general settings
sub-linear correctors need not exist as shown in
\cite{LS_correctors}.
Thus, loosely speaking, 
$\{v_\theta(t,x)\,:\,\theta\in\R^d\}$ can be thought of as a family of $t$-dependent correctors. 

Besides the beauty and simplicity of this abstract result {\em in se},
our main interest is motivated by applications to the stationary
ergodic setting. In Section \ref{sez stationary ergodic framework} we
{take}  a further step  and show that, in order to have
\eqref{intro linear data} with probability one, it suffices to check
that \
$\lim_{\eps\to 0^+} u^\eps_\theta(1,0,\omega)=-\overline H(\theta)$ \
almost surely with respect to $\omega$. 
Our general results are applied in Section  \ref{sez non convex homogenization} to obtain
homogenization for a one dimensional Hamilton-Jacobi equation of the form 
\begin{equation}\label{intro eq parabolic}\tag{HJ$_\epsilon^\omega$}
u^\eps_t-\eps A\left(\frac x\eps\right) u^\eps_{x\,x}+H\left(\frac{x}{\eps}, u^\eps_x,\omega\right)=0\quad\hbox{in $(0,+\infty)\times\R$},
\end{equation}
where the stationary random field $H:\Omega\to \D{C}(\R^d\times\R^d)$
takes values in a special class of non-convex and uniformly
superlinear Hamiltonians. More precisely, we will assume that $H$ is
{\em pinned} at finitely many values $p_1<p_2<\dots<p_n$, meaning that
$H(\cdot,p_i,\cdot)$ is constant on $\R\times\Omega$ for each fixed
$i$ (see Definition \ref{def pinned}), and {\em piecewise convex} in
$p$, meaning that $H(x,\cdot,\omega)$ is convex on each of the
intervals $(-\infty,p_1),\ (p_1,p_2),\dots,\ (p_{n-1},+\infty)$, for
every fixed $(x,\omega)$, see Theorem \ref{teo hom final}.  When
$A\equiv 0$, we can weaken the convexity assumption to
  level-set convexity.

In order to obtain this result, we consider first the case of a stationary Hamiltonian {which is  
pinned at $p=0$.} Such a Hamiltonian can be always written as 
\begin{equation}
  \label{1pin}
  H(x,p,\omega)=\min\{H_-(x,p,\omega),H_+(x,p,\omega)\}=
\begin{cases}
  H_-(x,p,\omega)&\text{if } p\leqslant 0
  \\H_+(x,p,\omega)&\text{if } p\geqslant 0,
\end{cases}
\end{equation} 
where $H_\pm:\Omega\to \D{C}(\R^d\times\R^d)$ are stationary random fields, uniformly coercive in $p$, and satisfying\  
$H_\pm(\cdot,0,\cdot)\equiv h_0$\  on\ \ $\R\times\Omega$\ \ for some constant $h_0\in\R$. The core of our argument consists in showing that, for every fixed $\omega$, the function
$u^\eps_\theta$ enjoys the same kind of monotonicity as its initial
datum $\theta x$ with respect to the $x$ variable. In particular,
$u^\eps_\theta$ is also a solution of
\begin{equation}\label{intro eq convex parabolic}
u^\eps_t-\eps A\left(\frac x\eps\right) u^\eps_{x\,x}+H_\pm\left(\frac{x}{\eps}, u^\eps_x,\omega\right)=0\quad\hbox{in $(0,+\infty)\times\R$},
\end{equation}
according to the sign of $\theta$. If we assume, in addition, that 
the equation \eqref{intro eq convex parabolic} homogenizes for both $H_+$ 
and $H_-$, then we immediately conclude that \eqref{intro eq parabolic} homogenizes for all linear initial data $g(x)=\theta x$. An application
of our previous results gives homogenization of \eqref{intro eq parabolic} for general uniformly continuous  
initial data. Furthermore, if we denote by $\overline{H}_{\pm}$ the
corresponding effective Hamiltonians associated to $H_\pm$, respectively, then the effective Hamiltonian $\overline H$ can be
expressed by the following formula
\[
\overline{H}(\theta)=\min\{\overline{H}_-(\theta),\overline{H}_+(\theta)\}\qquad\hbox{for every $\theta\in\R$}.
\]
The assumption that \eqref{intro eq convex parabolic} homogenizes is,
for instance, fulfilled when $H_\pm$ are convex, or even level-set
convex when $A\equiv 0$, in view of known homogenization results
\cite{AT15, DS09, KRV, LS_viscous,RT,Sou99}. In this case, we conclude 
that equation \eqref{intro eq parabolic} homogenizes when $H$ is of
the form \eqref{1pin}. 
In particular, we infer that $\overline H$ can be neither
convex nor even level-set convex, see Remark \ref{oss non convex
  effective H}.

The case when $H$ is pinned at $p_0\not=0$ can be always
reduced to the one considered above by replacing $H$
with $H(\cdot,p_0+\cdot,\cdot)$, see Remark \ref{shifts}. The
extension of the homogenization result to piecewise convex stationary
Hamiltonians with multiple pinning points is obtained by
  induction on the number of pinning points, see Theorem \ref{teo hom
  final}.  The basic idea is that a piecewise convex stationary
Hamiltonian with $n$ pinning points can be always written in the form
\eqref{1pin} for some $H_\pm$ of same type but with fewer pinning points.

Although we are able to treat only a special family of Hamiltonians in
one dimension, the results are new in the viscous case.  We stress
that the Hamiltonians we consider are typically {neither
  level-set convex nor satisfy any homogeneity condition with respect
  to $p$,} and are, thus, not covered by  examples
{treated} in \cite{Feh} or
\cite[equation (1.6)]{AC}. 
{Even though} the results of \cite{AC} hold in all
dimensions, they  require a finite range
dependence condition on the coefficients. The last assumption is
typically considered to be very restrictive but, as it was recently
demonstrated in \cite{Zil}, homogenization in general stationary
ergodic settings and dimensions larger than one need not hold for
non-convex but otherwise ``standard'' Hamiltonians without some
condition on the decay of correlations of the coefficients. Earlier
works on non-convex homogenization in the stationary ergodic setting
include homogenization for level-set convex Hamiltonians in the
non-viscous case in one space dimension \cite{DS09} and in any
dimension \cite{AS}, see also \cite{Feh} for some additional results
and extensions to viscous case. The first example of homogenization
for a class of Hamiltonians which are not level-set convex was given
in \cite{ATY_nonconvex} for the non-viscous case in all dimensions.
Papers \cite{ATY_1d} and \cite{Gao} provide quite general non-convex
homogenization results for one-dimensional non-viscous HJ
equations.\medskip

We end this introduction by comparing our condition \eqref{intro
  linear data} with a notion of ergodicity introduced in \cite{AB03}
in the context of periodic homogenization. Let us set
$F(x,p,X):=-\Tr(A(x)X)+H(x,p)$ and assume that $F$ is $\Z^d$-periodic
in $x$. Following \cite{AB03}, the function $F$ is said to be {\em
  ergodic at $\theta\in\R^d$} if the periodic solution $w_\theta$ of
\begin{equation*}
  w_t-\Tr \left(A(y)D_x^2w\right)+H(x,\theta+D_x w)=0 \quad\text{in }(0,+\infty)\times\R^d 
\end{equation*}
with initial condition $w(0,\cdot)=0$\ on $\R^d$\ satisfies \
$w_\theta(t,x)/t\to c$\ as $t\to+\infty$ uniformly in $x$, where
$c=c(\theta)$ is a constant.  It was shown in \cite[Section 2.5]{AB03}
that the ergodicity of $F$ at each $\theta\in\R^d$ implies that
(HJ$_\epsilon$) homogenizes, with $\overline H(\theta):=-c(\theta)$
for every $\theta\in\R^d$.  This holds, of course, under proper
assumptions on the parabolic equation associated with $F$, that are
for instance fulfilled when $A$ and $H$ satisfy our standing
assumptions (A1)-(A2) and (H1)-(H2), respectively, and the Cauchy
problem associated to \eqref{intro viscous hj} is well-posed in a
suitable class of continuous functions (see Section \ref{sez notation}
for more details).
To see a connection with our results, observe that the above notion of
ergodicity can be thought of as a version of homogenization of
(HJ$_\epsilon$) for linear initial data. Indeed, note that 
$u^\epsilon_\theta(t,x)=\langle\theta,x\rangle+ \epsilon w_\theta(t/\epsilon,
x/\epsilon)$.  
The ergodicity is equivalent to the
statement that, for every fixed $t>0$,
\begin{equation}
  \label{lidc}
  \lim_{\epsilon \to 0+}u^\epsilon_\theta(t,x)=\langle
\theta,x\rangle-t\,\overline{H}(\theta)\quad \text{uniformly in }x\in\R^d.
\end{equation}
Thus, the above cited result from \cite{AB03} can be restated as
follows: if the convergence \eqref{lidc} takes place for every fixed
$t>0$ and $\theta\in\R^d$, then (HJ$_\epsilon$) homogenizes.  Taking
into account that we do not assume { that} $A$ and $H$
{  are} periodic in $x$ and, thus, forgo the advantages of
having $x$ in a compact set, it is clear that our Theorem~\ref{app teo
  hom} is a close cousin of the quoted result from
\cite{AB03}.\medskip

The paper is organized as follows. Section~\ref{sez notation} is
devoted to preliminaries: basic notation, definitions, comparison
principles, existence and properties of solutions to \eqref{intro
  viscous hj}, both in the viscous and in the non-viscous
case. Section~\ref{sez general} contains our first results concerning
the connection between homogenization and homogenization with linear
initial data.  In Section~\ref{sez stationary ergodic homogenization}
we introduce the stationary ergodic formulation, present a stationary
ergodic version of Theorem~\ref{app teo hom}, and use 
blue our general results to show homogenization for a class of
non-convex viscous HJ equations in one space dimension.\medskip

\smallskip\indent{\textsc{Acknowledgments.}} This collaboration
originates from discussions the authors had during the Research
Program {\em Homogenization and Random Phenomenon} at the
Institut Mittag-Leffler (1 September- 12 December, 2014).  
The authors wish to thank the
organizers for the invitation and
the Institut Mittag-Leffler for hospitality,
stimulating research atmosphere, and financial
support. {The second author would also like to thank
  Yifeng Yu for discussions.}  The authors were partially supported by
the Simons Foundation through a Collaboration Grant for Mathematicians
\#209493. The second author was also partially supported by INdAM -
GNAMPA Research Project 2016 {\em Fenomeni asintotici e
  omogeneizzazione} and by Universit\`a di Roma La Sapienza - Research
Funds 2013.

\numberwithin{equation}{section}

\section{Preliminaries}\label{sez notation}

Throughout the paper, we denote by $\langle\cdot,\cdot\rangle$
and $|\cdot|$, respectively, the scalar product and the Euclidean norm
on $\R^d$, $d\in\N$.  We let $B_R(x_0)$ and $B_R$ be the open balls in
$\R^d$ of radius $R$ centered at $x_0$ and $0$, respectively. For a
given subset $E$ of $\R^{d}$, we will denote by $\overline E$ its
closure. 

By modulus of continuity we mean a nondecreasing function from $[0,+\infty)$ to
$[0,+\infty)$, vanishing and continuous at $0$. 

Given a metric space $X$, we write $\varphi_n\ucv\varphi$ on $X$
{ when}  the sequence of functions $(\varphi_n)_n$ uniformly
converges to $\varphi$ on compact subsets of $X$.  We  denote by
 $\Lip(X)$, $\D{UC}(X)$, $\D{LSC}(X)$, and $\D{USC}(X)$ the space of Lipschitz continuous, uniformly
continuous, lower semicontinuous, and upper semicontinuous real
functions on the metric space $X$, respectively.\par

Given an open subset $U$ of either $\R^d$ or $\R^{d+1}$ and a
measurable function $g:U\to\R$, we  write
$\|g\|_{L^\infty(U)}$, or simply $\|g\|_\infty$ when no ambiguity is
possible, to refer to the usual $L^\infty$-norm of $g$. The space of
essentially bounded functions on $U$ is denoted by $L^\infty(U)$.

Let $k\in\N$. We denote by $\D{C}^k(\R^d)$ the space of continuous functions that are differentiable in $\R^d$ with continuous derivatives up to order $k$ inclusively,  and { set} $\D{C}^\infty(\R^d):=\bigcap_{k\in\N} \D{C}^k(\R^d)$. 
Furthermore, for $k\geqslant 2$, we denote by $\Lip^k(\R^d)$ the space of Lipschitz functions defined on $\R^d$ that {have Lipschitz derivatives} 
up to order $k-1$ inclusively. 
In the sequel we shall often use the notation
\[
 \|D_x g\|_\infty:=\sum_{i=1}^d \|\partial_{x_i} g\|_\infty,\qquad 
 \|D^2_x g\|_\infty:=\sum_{i,j=1}^d \|\partial^2_{x_ix_j} g\|_\infty.  
\]

We shall record the following basic density result for future use.

\begin{lemma}\label{lemma density}
Let $k\in\N$. The space of functions $\D{C}^\infty(\R^d)\cap \Lip^k(\R^d)$ is dense in $\D{UC}(\R^d)$ with respect to the $\|\cdot\|_{L^\infty(\R^d)}$ norm.
\end{lemma}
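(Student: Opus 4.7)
The plan is to prove the density by standard mollification, with a bit of care to ensure that the mollified function lies in $\Lip^k(\R^d)$, not merely in $\D{C}^\infty(\R^d)$.

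First, I would fix a standard mollifier $\rho\in\D{C}^\infty_c(\R^d)$ with $\rho\geqslant 0$, $\supp\rho\subset B_1$, and $\int\rho\,dx=1$, and set $\rho_\delta(x):=\delta^{-d}\rho(x/\delta)$ for $\delta>0$. Given $g\in\D{UC}(\R^d)$, I would set $g_\delta:=g*\rho_\delta$. A uniformly continuous function has at most linear growth, so the convolution is well defined, belongs to $\D{C}^\infty(\R^d)$, and, denoting by $\omega_g$ the modulus of continuity of $g$, the usual estimate
\[
|g_\delta(x)-g(x)|\leqslant\int_{B_\delta}|g(x-y)-g(x)|\rho_\delta(y)\,dy\leqslant\omega_g(\delta)
\]
gives $\|g_\delta-g\|_{L^\infty(\R^d)}\to 0$ as $\delta\to 0^+$. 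This handles the approximation in $\D{UC}(\R^d)$.

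The main (and only delicate) point is verifying that $g_\delta\in\Lip^k(\R^d)$, i.e.\ that the derivatives of $g_\delta$ up to order $k$ are bounded on $\R^d$. Since $g$ may be unbounded, a naive bound $|D^\alpha g_\delta|\leqslant \|g\|_\infty\|D^\alpha\rho_\delta\|_{L^1}$ fails. The trick I would use is that for any multi-index $\alpha$ with $|\alpha|\geqslant 1$ one has $\int_{\R^d} D^\alpha\rho_\delta\,dy=0$, so that
\[
D^\alpha g_\delta(x)=\int_{B_\delta}g(x-y)\,D^\alpha\rho_\delta(y)\,dy
=\int_{B_\delta}\bigl(g(x-y)-g(x)\bigr)\,D^\alpha\rho_\delta(y)\,dy,
\]
and hence $\|D^\alpha g_\delta\|_{L^\infty(\R^d)}\leqslant\omega_g(\delta)\,\|D^\alpha\rho_\delta\|_{L^1(\R^d)}$. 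In particular $D^\alpha g_\delta$ is bounded for every $|\alpha|\leqslant k$, which means that all derivatives of $g_\delta$ of order $\leqslant k-1$ are Lipschitz continuous, so $g_\delta\in\D{C}^\infty(\R^d)\cap\Lip^k(\R^d)$.

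Combining the two steps, $(g_{1/n})_{n\in\N}$ lies in $\D{C}^\infty(\R^d)\cap\Lip^k(\R^d)$ and converges to $g$ in the uniform norm, proving the density. The only subtlety — and the potential obstacle one must not overlook — is precisely the fact that $g\in\D{UC}(\R^d)$ does not imply $g\in L^\infty(\R^d)$, which is why the mean-zero trick for $D^\alpha\rho_\delta$ is essential; apart from that, the argument is entirely routine.
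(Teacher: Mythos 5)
Your proof is correct, and it takes a genuinely different route than the paper's. The paper first invokes the density of $\Lip(\R^d)$ in $\D{UC}(\R^d)$ (citing an external reference) and then mollifies a Lipschitz function $g$: in that case the derivatives of $g*\rho_\delta$ are trivially bounded, since one can place a single derivative on $g$ and the remaining $|\alpha|-1$ on the kernel, giving $\|D^\alpha(g*\rho_\delta)\|_\infty \leqslant \Lip(g)\,\|D^{\alpha-e_i}\rho_\delta\|_{L^1}$. You instead mollify $g\in\D{UC}(\R^d)$ directly and use the cancellation $\int D^\alpha\rho_\delta=0$ for $|\alpha|\geqslant 1$ to replace $g(x-y)$ by $g(x-y)-g(x)$ inside the integral, so that the derivative bound comes from the modulus of continuity $\omega_g(\delta)$ rather than from $\|g\|_\infty$ (which may be infinite). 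This eliminates both the intermediate reduction to Lipschitz data and the external citation, at the price of the mean-zero manipulation; your observations that uniformly continuous functions have at most linear growth (so the convolution is well defined) and that the naive bound $\|g\|_\infty\|D^\alpha\rho_\delta\|_{L^1}$ fails are exactly the right points to flag. Both arguments are correct and roughly equally short; yours is self-contained.
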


\begin{proof}
Since $\Lip(\R^d)$ is dense in $\D{UC}(\R^d)$, see for instance \cite[Theorem 1]{GaJa}, it is enough to show that any $g\in \Lip(\R^d)$ can be uniformly approximated in $\R^d$ by functions in $\D{C}^\infty(\R^d)\cap \Lip^k(\R^d)$. But this readily follows by regularizing $g$ via a convolution with a standard mollification kernel. 
\end{proof}

%

\indent Throughout the paper, we  denote by $A(x)$ a positive semi-definite symmetric $d\times d$ matrix, depending on $x\in\R^d$, with bounded and Lipschitz square root, namely, $A=\sigma^{T}\sigma$ for some $\sigma:\R^d\to\R^{m\times d}$, where $\sigma$ satisfies the following  conditions: there is a constant $\Lambda_A>0$ such that
\begin{itemize}
\item[(A1)] \quad $|\sigma(x)|\leqslant \Lambda_A$ \quad for every $x\in\R^d$;\smallskip
\item[(A2)] \quad $|\sigma(x)-\sigma(y)|\leqslant \Lambda_A |x-y|$ \quad for every $x,y\in\R^d$.\medskip
\end{itemize}
We stress that the case $A\equiv 0$ is included. We  {let} $H:\R^d\times\R^d\to \R$ {be} a
continuous function, hereafter called {\em Hamiltonian}, satisfying
the following assumptions:
\begin{itemize}
\item[(H1)] $H\in\D{UC}(\R^d\times B_R)$ for every 
    $R>0$;\smallskip
  \item[(H2)] there exist two continuous, coercive, and nondecreasing
    functions $\alpha, \beta:\R_{+}\to\R$ such that
\[
\alpha(|p|)\leqslant H(x,p)\leqslant\beta(|p|)\qquad\hbox{for every $(x,p)\in\R^d\times\R^d$}.
\]
\end{itemize}
By coercive we mean that $\displaystyle\lim_{R\to+\infty}{\alpha(R)}=\displaystyle\lim_{R\to+\infty}{\beta(R)}=+\infty$. 

Assumption (H2) amounts to saying that the Hamiltonian is coercive and locally bounded in $p$, uniformly with respect to $x$.

A Hamiltonian $H$ will be termed {\em convex} if $H(x,\cdot)$ is convex on $\R^d$ for every $x\in\R^d$, 
 and non-convex otherwise.
\smallskip 

Let us consider the unscaled equation 
\begin{equation}\label{app hj}\tag{HJ$_{1}$}
{\partial_t u}-\D{tr}(A(x)D_x^2u)+H(x, D_x u)=0\quad \hbox{in $ (0,+\infty)\times \R^d$.}
\end{equation}

We shall say that a function $v\in\D{USC}((0,+\infty)\times\R^d)$ is an (upper semicontinuous) {\em viscosity subsolution} of \eqref{app hj} if, for every  $\phi\in\D{C}^2((0,+\infty)\times \R^d)$ such that $v-\phi$ attains a local maximum at $(t_0,x_0)\in (0,+\infty)\times \R^d$, we have 
\begin{equation}\label{app subsolution test}
\partial_t \phi(t_0,x_0)-\Tr\big(A(x_0)D_x^2\phi(t_0,x_0)\big)+H(x_0, D_x \phi(t_0,x_0))\leqslant 0. 
\end{equation}
Any such test function $\phi$ will be called {\em supertangent} to $v$ at $(t_0,x_0)$. 

We shall say that $w\in\D{LSC}((0,+\infty)\times\R^d)$ is a (lower semicontinuous) {\em viscosity supersolution} of \eqref{app hj} if, for every $\phi\in\D{C}^2((0,+\infty)\times \R^d)$ such that $w-\phi$ attains a local minimum at $(t_0,x_0)\in (0,+\infty)\times \R^d$, we have 
\begin{equation}\label{app supersolution test}
\partial_t \phi(t_0,x_0)-\Tr\big(A(x_0)D_x^2\phi(t_0,x_0)\big)+H(x_0, D_x \phi(t_0,x_0))\geqslant 0. 
\end{equation}
Any such test function $\phi$ will be called {\em subtangent} to $w$ at $(t_0,x_0)$. 
A continuous function on $\cyl$ is a {\em viscosity solution} of \eqref{app hj} if it is both a viscosity sub and supersolution. Solutions, subsolutions, and supersolutions will be always intended in the viscosity sense, hence the term {\em viscosity} will be omitted in the sequel. 

It is well known, see for instance \cite{barles_book}, that the notions of viscosity sub and supersolutions are local, in the sense that the test function $\phi$ needs to be defined only in a neighborhood of the point $(t_0,x_0)$. Moreover, up to adding to $\phi$ a superquadratic term, such a point can be always assumed to be either a strict local maximum or a strict local minimum point of $u-\phi$. If this case we shall say that $\phi$ is a {\em strict supertangent} (resp., {\em strict subtangent}) to $u$ at $(t_0,x_0)$.\par

We shall denote by $\K$ the space of functions $u: \ccyl\to\R$ for which there exists a function $f\in\D{UC}(\R^d)$, depending on $u$, such that, for every $T>0$, 
\begin{equation}\label{def K}
|u(t,x)-f(x)|\leqslant C_T\quad\hbox{for all $(t,x)\in\cTcyl$}
\end{equation}
for some constant $C_T>0$. We {let}  $\K_*$ {be} the subspace of functions $u\in\K$ which
  satisfy the following uniform continuity condition in time at $t=0$:
\begin{align}\label{def K*}
&\qquad\qquad\hbox{for every $ a >0$ there exists $M_ a >0$ such that}&\nonumber\\
&\qquad |u(t,x)-u(0,x)|\leqslant  a +t\,M_ a \qquad\hbox{for every $(t,x)\in\ccyl$.}\tag{*}
\end{align}

\begin{definizione}\label{def well posed}
We shall say that the Cauchy problem for \eqref{app hj} is {\em $\K_*$-well-posed} if the following two properties hold:\smallskip
\begin{itemize}
 \item[(a)] (Existence) for every $g\in\D{UC}(\R^d)$, there exists a continuous function $u\in\K_*$ which solves \eqref{app hj} and satisfies the initial condition $u(0,\cdot)=g$ on $\R^d$;\medskip
 \item[(b)] (Comparison) if $u_1,\,u_2$ are continuous solutions to \eqref{app hj} belonging to $\K_*$ with $u_1(0,\cdot),\,u_2(0,\cdot)\in\D{UC}(\R^d)$, then 
 \[
  \|u_1(t,\cdot)-u_2(t,\cdot)\|_{L^\infty(\R^d)}\leqslant \|u_1(0,\cdot)-u_2(0,\cdot)\|_{L^\infty(\R^d)}\quad\hbox{for all $t\in[0,+\infty)$.}
 \]
\end{itemize}
\end{definizione}
\smallskip We start by proving a comparison principle, 
which will be used several times throughout the paper.

\begin{prop}\label{prop general Lip comparison}
Assume that $A$ satisfies (A1)-(A2) and  $H$ satisfies (H1). Let $v\in\D{USC}(\ccyl)$ and $w\in\D{LSC}(\ccyl)$ be, respectively, a sub and a supersolution of \eqref{app hj} belonging to $\K$. Let us furthermore assume that, for every $T>0$, either $D_x v$ or $D_x w$ belongs to $\left(L^\infty(\cTcyl)\right)^d$. Then 
\[
v(t,x)-w(t,x)\leqslant \sup_{\,\R^d}\big(v(0,\cdot)-w(0,\cdot)\big)\quad\hbox{for every  $(t,x)\in\ccyl$.}  
\]
\end{prop}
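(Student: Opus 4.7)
The plan is a standard doubling-of-variables comparison argument on the unbounded spatial domain, with the Lipschitz bound on one of the solutions used both to force the doubled-variables supremum to be attained and to keep the test gradients inside a compact set on which $H$ is uniformly continuous by (H1). Without loss of generality I shall assume $D_x v \in (L^\infty(\cTcyl))^d$ with seminorm $L_T$ for every $T>0$. Set $M := \sup_{\R^d}\bigl(v(0,\cdot)-w(0,\cdot)\bigr)$, which I may suppose finite. Fix $T>0$ and suppose for contradiction that $v(\hat t,\hat x) - w(\hat t,\hat x) > M$ at some $(\hat t,\hat x)\in(0,T]\times\R^d$; choose $\eta>0$ with $v(\hat t,\hat x) - w(\hat t,\hat x) - 2\eta T > M$.

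Next, for parameters $\alpha,\beta,\gamma>0$ I would introduce on $[0,T]^2\times\R^d\times\R^d$ the test function
\[
\Phi(t,s,x,y) := v(t,x) - w(s,y) - \frac{|x-y|^2}{2\alpha} - \frac{(t-s)^2}{2\beta} - \eta(t+s) - \gamma\bigl(|x-\hat x|^2 + |y-\hat x|^2\bigr).
\]
The Lipschitz bound on $v(t,\cdot)$ and the linear-growth bound on $w(s,\cdot) \geq f_w - C_T$ inherited from $w \in \K$ and the uniform continuity of $f_w$ ensure that $\Phi\to-\infty$ as $|x|+|y|\to\infty$, so $\Phi$ attains its maximum at some $(t_\alpha,s_\alpha,x_\alpha,y_\alpha)$. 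Evaluating $\Phi$ at $(\hat t,\hat t,\hat x,\hat x)$ yields the strict lower bound $\Phi(t_\alpha,s_\alpha,x_\alpha,y_\alpha) > M$ independent of the parameters. Routine estimates (sending $\beta\to 0$, then $\alpha\to 0$ with $\eta,\gamma$ fixed) give $|x_\alpha-y_\alpha|^2/\alpha \to 0$, $(t_\alpha-s_\alpha)^2/\beta \to 0$, and $(x_\alpha,y_\alpha)$ confined to a bounded set determined by $\gamma$; the strict lower bound together with the upper/lower semicontinuity of $v,w$ prevents $t_\alpha,s_\alpha$ from tending to $0$, while the $\eta(t+s)$ penalty (together with choosing $T$ slightly larger than $\hat t$) keeps them below $T$.

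At the resulting interior maximum I would invoke the parabolic Crandall--Ishii lemma to obtain $a := (t_\alpha-s_\alpha)/\beta$ and symmetric matrices $X,Y$ such that, with $p_\alpha := (x_\alpha-y_\alpha)/\alpha$,
\begin{align*}
a + \eta - \Tr(A(x_\alpha)X) + H\bigl(x_\alpha,\,p_\alpha + 2\gamma(x_\alpha-\hat x)\bigr) &\leq 0,\\
a - \eta + \Tr(A(y_\alpha)Y) + H\bigl(y_\alpha,\,p_\alpha - 2\gamma(y_\alpha-\hat x)\bigr) &\geq 0,
\end{align*}
together with the key bound $\Tr(A(x_\alpha)X - A(y_\alpha)Y) \leq C|x_\alpha-y_\alpha|^2/\alpha + O(\gamma)$, which uses that $\sigma$ is Lipschitz by (A2) and $A=\sigma^T\sigma$. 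The Lipschitz bound on $v$ implies $|p_\alpha + 2\gamma(x_\alpha-\hat x)| \leq L_T$ (otherwise one could translate $x_\alpha$ slightly and strictly increase $\Phi$), so both gradient arguments of $H$ lie in a fixed ball $B_R$ independent of $\alpha$. Subtracting the two viscosity inequalities and invoking the modulus of continuity $\omega_R$ of $H$ on $\R^d\times B_R$ provided by (H1), I would arrive at
\[
2\eta \leq C\frac{|x_\alpha-y_\alpha|^2}{\alpha} + \omega_R\bigl(|x_\alpha-y_\alpha| + O(\gamma)\bigr) + O(\gamma),
\]
and then letting $\alpha\to 0$, then $\gamma\to 0$, would produce $2\eta\leq 0$, a contradiction.

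I expect the main hurdle to be the control of the second-order difference $\Tr(A(x_\alpha)X - A(y_\alpha)Y)$: without the Lipschitz regularity of $\sigma$ supplied by (A2), this term cannot be absorbed by the vanishing quantity $|x_\alpha-y_\alpha|^2/\alpha$ and the argument collapses. The Lipschitz hypothesis on $v$ (or $w$) plays a distinct and comparatively lighter role, confining the gradient arguments of $H$ to a fixed ball, so that the local uniform continuity of $H$ from (H1), rather than a global modulus, is enough to close the estimate.
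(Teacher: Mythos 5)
Your argument is a self-contained doubling-of-variables proof, which is genuinely different from the paper's route: the paper normalizes to $\sup(v(0,\cdot)-w(0,\cdot))=0$, uses Lemma~\ref{lemma density} to extract a smooth $g\in \D{C}^\infty\cap\Lip^3$ with $v-g$ bounded on each $\cTcyl$, passes to the shifted Hamiltonian $\tilde H(x,p)=-\Tr(A D^2_xg)+H(x,p+D_xg)$, and then invokes Proposition~1.4 of \cite{D16} as a black box. Your version has the advantage of being entirely explicit and of making transparent where (A2) and the Lipschitz hypothesis on one of the two solutions actually enter (controlling $\Tr(AX-AY)$ and confining the gradient slots of $H$ to a fixed ball, respectively), whereas the paper's buys brevity at the cost of relying on the cited result.

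The overall scheme is sound, but one step is asserted with more confidence than it has earned, and it is in fact where the hypotheses are used in a way you do not spell out. You write that the $H$-terms contribute $\omega_R(|x_\alpha-y_\alpha|+O(\gamma))$, i.e.\ that the gradient slots $p_\alpha+2\gamma(x_\alpha-\hat x)$ and $p_\alpha-2\gamma(y_\alpha-\hat x)$ differ by $o(1)$ as $\gamma\to 0$. Their difference is $2\gamma(x_\alpha+y_\alpha-2\hat x)$, so this requires $\gamma|x_\alpha-\hat x|,\ \gamma|y_\alpha-\hat x|\to 0$. The crude bound one gets from $\Phi>\;M$ together with the linear growth $v(t,x)-w(s,y)\lesssim 1+|x|+|y|$ only gives $|x_\alpha-\hat x|\lesssim\gamma^{-1}$, hence $\gamma|x_\alpha-\hat x|=O(1)$, which does not close. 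The correct estimate is available, but it uses more than linear growth: since both $v,w\in\K$ and $M=\sup(v(0,\cdot)-w(0,\cdot))<\infty$, one has $f_v-f_w\leqslant M+2C_T$ on $\R^d$, and hence, for $(t,s)\in[0,T]^2$,
\[
v(t,x)-w(s,y)\ \leqslant\ f_v(x)-f_w(y)+2C_T\ \leqslant\ \bigl(f_v(x)-f_w(x)\bigr)+\bigl(f_w(x)-f_w(y)\bigr)+2C_T\ \leqslant\ C+\omega_{f_w}(|x-y|),
\]
which is bounded above up to a modulus of $|x-y|$ alone. Feeding this into $\Phi(t_\alpha,s_\alpha,x_\alpha,y_\alpha)>M$ and absorbing the $|x_\alpha-y_\alpha|$ term into $|x_\alpha-y_\alpha|^2/(2\alpha)$ gives $\gamma\bigl(|x_\alpha-\hat x|^2+|y_\alpha-\hat x|^2\bigr)\leqslant C$ uniformly in $\alpha\in(0,1]$ and $\gamma\in(0,1]$, so $\gamma|x_\alpha-\hat x|=O(\sqrt\gamma)$. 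This is what justifies your $o(1)$ claim (it is $O(\sqrt\gamma)$, not $O(\gamma)$, but that is harmless). Without this observation the argument would genuinely break, so it deserves to be made explicit; it is the second place, after the coercivity of $\Phi$ in $(x,y)$, where the definition of $\K$ and the finiteness of $M$ do real work.

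Two smaller points. First, the supersolution inequality should read $a-\eta-\Tr\bigl(A(y_\alpha)Y\bigr)+H\bigl(y_\alpha,\,p_\alpha-2\gamma(y_\alpha-\hat x)\bigr)\geqslant 0$ (you wrote $+\Tr$; your subsequent combination $\Tr(A(x_\alpha)X-A(y_\alpha)Y)$ is the correct quantity, so this is just a typo). Second, the remark that the $\eta(t+s)$ term ``keeps $t_\alpha,s_\alpha$ below $T$'' is not quite the right mechanism: since the equation is posed on the whole half-line $(0,+\infty)$, the point $t=T$ is not a boundary of the domain where \eqref{app hj} holds, and the clean way to proceed is either to take the maximum over $[0,T']^2$ with $T'>T$ and invoke the standard fact that parabolic sub/supersolutions can be tested at the final time, or to work directly on $[0,T]^2$ and use that same fact. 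The $\eta(t+s)$ term's only essential role is to turn the potential inequality $\partial_t\phi\leqslant\ldots$ versus $\partial_s\phi\geqslant\ldots$ into the strict gap $2\eta$, exactly as you use it.
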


\begin{proof} 
Excluding trivial cases and adding a constant to $w$ as necessary, we can assume that $\sup_{\,\R^d}\big(v(0,\cdot)-w(0,\cdot)\big)=0$. Our task  is thus reduced to proving that $v\leqslant w$ on $\ccyl$. By definition of $\K$ and Lemma \ref{lemma density}, there exists a function $g\in\D{C}^\infty(\R^d)\cap \Lip^3(\R^d)$ 
such that $v-g$ is bounded in $\cTcyl$, for every fixed $T>0$. Now notice that the functions $\tilde v(t,x):=v(t,x)-g(x)$ and $\tilde w(t,x):=w(t,x)-g(x)$ are, respectively, an upper semicontinuous subsolution and a lower semicontinuous supersolution of \eqref{app hj} with modified Hamiltonian $\tilde H(x,p)=-\D{tr}(A(x)D^2_x g)+H(x,p+D_x g)$. Therefore, it suffices to establish the result for $\tilde v$, $\tilde w$ and $\
\tilde H$ in place of $v,\,w$ and $H$, respectively. Notice that, for every fixed $T>0$, the function $\tilde v$ is bounded in $\cTcyl$, while $\tilde w$ satisfies
\[
 \tilde w(t,x)\geqslant -2C_T+\tilde w(0,x)\geqslant -2C_T+\tilde v(0,x)\quad\hbox{for all $(t,x)\in\cTcyl$}
\]
for some constant $C_T>0$, by definition of $\K$. The assertion {now} follows from Proposition 1.4 in \cite{D16} with $U:=\R^d$. 
\end{proof}

If $A\equiv 0$ and $H$ satisfies (H1)-(H2), then the Cauchy problem
for (HJ$_1$) is always $\K_*$-well-posed, as stated in Theorem~\ref{teo
  well posed first order} below. But first we record a slightly more
general comparison result than (b) for future use.

\begin{prop}\label{prop first order comparison}
Let $A\equiv 0$ and $H$ satisfy hypotheses (H1)-(H2). Suppose that $v\in\D{USC}(\ccyl)$ and $w\in\D{LSC}(\ccyl)$ are, respectively, a sub and a supersolution of \eqref{app hj} belonging to $\K_*$. Then 
\[
w(t,x)-v(t,x)\leqslant \sup_{\,\R^d}\big(w(0,\cdot)-v(0,\cdot)\big)\quad\hbox{for every  $(t,x)\in\ccyl$.}  
\] 
\end{prop}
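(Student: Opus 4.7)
The plan is to exploit the first-order structure ($A \equiv 0$) together with the coercivity~(H2) to upgrade the subsolution $v$ to one with bounded spatial gradients on every strip $\cTcyl$, and then invoke Proposition~\ref{prop general Lip comparison} to conclude. The key observation is that, for a coercive first-order Hamilton-Jacobi equation, any USC subsolution in $\K_*$ is automatically Lipschitz in $x$ on each $\cTcyl$.

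To establish this Lipschitz regularity, I would first use Lemma~\ref{lemma density} to subtract from $v$ a smooth Lipschitz approximation $g \in \D{C}^\infty(\R^d) \cap \Lip^3(\R^d)$ of the $\D{UC}$ function associated to $v$ via the definition of $\K$. The modified $\tilde v := v - g$ is then bounded on every $\cTcyl$, and is a subsolution of the first-order equation with modified Hamiltonian $\tilde H(x,p) := H(x, p + D_x g(x))$, which still satisfies (H1)--(H2). Fix $T > 0$ and, for parameters $L, \mu, \delta, \eta > 0$ to be chosen, consider the auxiliary function
\[
\Psi(t, x, y) := \tilde v(t, x) - \tilde v(t, y) - L \sqrt{|x-y|^2 + \mu^2} - \delta\bigl(\langle x\rangle + \langle y\rangle\bigr) - \frac{\eta}{T - t},
\]
where $\langle z\rangle := \sqrt{1 + |z|^2}$. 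Boundedness of $\tilde v$ on $\cTcyl$ together with the penalty $\delta(\langle x\rangle + \langle y\rangle)$ ensures that $\Psi$ attains its supremum at some $(\bar t, \bar x, \bar y) \in [0, T) \times \R^d \times \R^d$, while the $\K_*$-regularity~(*) prevents the supremum from concentrating at $\bar t = 0$ once $L$ exceeds a threshold determined by the modulus of continuity of $\tilde v(0, \cdot)$.

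At this interior maximum I would then apply the viscosity subsolution inequality separately in the $x$ and the $y$ variable (the regularization $\mu>0$ making the $|x-y|$-penalty smooth), combine the two outputs with the coercivity bound $\alpha(|p|) \leqslant \tilde H(x, p)$ from (H2), and derive an a priori bound $L \leqslant L_0(T, \mu, \delta, \eta)$ independent of $L$. Any choice $L > L_0$ yields a contradiction unless $\bar x = \bar y$, in which case $\Psi \leqslant 0$ on $[0, T) \times \R^d \times \R^d$. Sending $\mu, \delta, \eta \to 0^+$ produces a spatial Lipschitz estimate $|\tilde v(t, x) - \tilde v(t, y)| \leqslant L|x-y|$ on $\cTcyl$, and hence $D_x v \in \bigl(L^\infty(\cTcyl)\bigr)^d$ for every $T > 0$.

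With this gradient bound in hand, Proposition~\ref{prop general Lip comparison} applied to $v$ and $w$ yields the desired inequality. The principal obstacle in the plan is the Lipschitz regularity step: one must orchestrate the parameters so that the doubling-of-variables argument closes on the unbounded spatial domain, with $\delta(\langle x\rangle + \langle y\rangle)$ compensating the $\K$-growth of $\tilde v$ and the barrier $\eta/(T - t)$ working together with the $\K_*$ condition~(*) to exclude $\bar t = 0$, since there the mere uniform continuity of $\tilde v(0, \cdot)$ would not suffice to absorb the $L|x - y|$ penalty.
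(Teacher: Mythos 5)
The Lipschitz regularization step is where the argument breaks down, and it is the heart of your proposal. Your auxiliary function $\Psi(t,x,y)$ compares $\tilde v$ with itself at a \emph{shared} time: freezing $y=\bar y$ and trying to read off the subsolution inequality at $(\bar t,\bar x)$ would require testing $\tilde v$ against $\phi(t,x):=\tilde v(t,\bar y)+L\sqrt{|x-\bar y|^2+\mu^2}+\delta\bigl(\langle x\rangle+\langle\bar y\rangle\bigr)+\eta/(T-t)+\mathrm{const}$, which is not $\D{C}^2$ in $t$ because $\tilde v(\cdot,\bar y)$ is merely upper semicontinuous. If instead you double time, writing $\tilde v(t,x)-\tilde v(s,y)-\gamma|t-s|^2-\cdots$, you do recover smooth test functions; but then fixing $(\bar t,\bar x)$ and maximizing in $(s,y)$ produces a smooth \emph{sub}tangent to $\tilde v$ at $(\bar s,\bar y)$, which yields a viscosity inequality only if $\tilde v$ is a \emph{super}solution. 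Since $\tilde v$ is only a subsolution, no information is available at the $y$-slot: doubling a function against itself needs both one-sided properties, and you have only one. Correspondingly, the claim that ``any USC subsolution in $\K_*$ is automatically Lipschitz in $x$'' is not established and is not to be expected from (H2) alone: a supertangent $\phi$ only gives $\alpha(|D_x\phi|)\leqslant -\partial_t\phi$, and a subsolution carries no a priori lower bound on $\partial_t\phi$ (the Lipschitz conclusion in Theorem~\ref{teo well posed first order} uses that $u$ is a two-sided \emph{solution} with Lipschitz data, precisely so that $\partial_t u$ is controlled from both sides).

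The proof the paper relies on (it refers to \cite[Proposition~A.2]{DZ15}) runs a direct doubling of variables between $v$ and $w$, with time doubled and with confining penalizations compatible with the growth allowed by $\K$. There the supersolution inequality for $w$ provides the second estimate that $\tilde v$ alone cannot. Coercivity (H2) enters not to regularize $v$ but to bound the gradient of the penalty at the maximum point: from the subsolution inequality one reads $\alpha(|p_1|)\leqslant C$ for the $x$-gradient $p_1$ of the penalty, which confines $p_1$ to a fixed ball $B_R$, forces $|\bar x-\bar y|\to 0$ as the quadratic penalization parameter blows up, and then lets (H1) close the comparison. The $\K_*$ property~(*) is used, as you correctly anticipated, to exclude the maximum from concentrating at $t=0$. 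So the intended use of (H2) and (*) in your proposal is right, but they must be deployed inside the $v$-versus-$w$ doubling, not in a preliminary $v$-versus-$v$ regularization step.
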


A proof can be found in \cite[Proposition A.2]{DZ15}. 

\begin{teorema}\label{teo well posed first order}
  Let $A\equiv 0$ and $H$ satisfy hypotheses (H1)-(H2). Then the
  Cauchy problem for (HJ$_1$) is $\K_*$-well-posed. Moreover, for each
  $g\in\D{UC}(\cyl)$, the solution $u$ belongs to
  $\D{UC}(\ccyl)$. 
  Furthermore, if $g\in \Lip(\R^d)$, the solution $u$ is
  Lipschitz continuous in $\ccyl$ and its Lipschitz constant depends
  only on $\|Dg\|_{L^\infty(\R^d)}$ and the functions
  $\alpha,\,\beta$.
\end{teorema}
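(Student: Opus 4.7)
The comparison assertion (b) is nothing but Proposition~\ref{prop first order comparison} applied twice, swapping the roles of $u_1$ and $u_2$. So the real content is existence, the characterization of the solution as belonging to $\K_*$, and the stated continuity/regularity properties. I would build everything on Perron's method together with the uniqueness and contractivity already in hand from Proposition~\ref{prop first order comparison}.

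First I would handle the Lipschitz case. Assume $g\in \Lip(\R^d)$ with $\|Dg\|_{L^\infty(\R^d)}\leqslant L$. Using (H2) I set $C:=\max\{\beta(L),-\alpha(0),0\}$ and claim that
\[
\underline u(t,x):=g(x)-C\,t,\qquad \overline u(t,x):=g(x)+C\,t
\]
are, respectively, a sub- and a supersolution of (HJ$_1$) in $\cyl$: approximating $g$ by mollification to a smooth Lipschitz $g_\delta$ and using (H2) reduces this to the classical inequalities $-C+H(x,Dg_\delta)\leqslant 0$ and $C+H(x,Dg_\delta)\geqslant 0$, which pass to the viscosity limit by stability. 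With these barriers, Perron's method produces
\[
u(t,x):=\sup\{v(t,x):v\text{ is a subsolution of (HJ$_1$) with }\underline u\leqslant v\leqslant \overline u\},
\]
whose upper and lower semicontinuous envelopes $u^*,u_*$ are, respectively, a sub- and a supersolution of (HJ$_1$) agreeing with $g$ at $t=0$. Proposition~\ref{prop first order comparison} then forces $u^*\leqslant u_*$, hence $u=u^*=u_*$ is continuous and a solution. The bound $|u(t,x)-g(x)|\leqslant Ct$ shows at once $u\in\K_*$ and gives time-Lipschitz control with constant $C$. Spatial Lipschitz control is obtained by the standard doubling-variables argument: with $L'>L$ chosen depending on $\alpha$, $\beta$ and the modulus of $H$ in the $x$-variable on $B_{L'}$ from (H1), one sets
\[
\Phi(t,x,y):=u(t,x)-u(t,y)-L'|x-y|-\eta t
\]
and, using penalization to make the sup attained at some $(\bar t,\bar x,\bar y)$, plugs the respective sub/super relations and exploits (H1) together with the coercivity in (H2) to contradict a positive supremum. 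This yields $\|Du\|_\infty\leqslant L'$ and completes the Lipschitz part of the theorem.

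Next, for general $g\in \D{UC}(\R^d)$ I would use Lemma~\ref{lemma density} to pick $g_n\in \D{C}^\infty(\R^d)\cap\Lip(\R^d)$ with $\|g_n-g\|_{L^\infty(\R^d)}\to 0$ (I can even keep a common modulus of continuity for $\{g_n\}$ by choosing the mollification scale appropriately). Let $u_n$ be the solution with initial datum $g_n$ produced above. The contractivity in (b) gives
\[
\|u_n(t,\cdot)-u_m(t,\cdot)\|_{L^\infty(\R^d)}\leqslant\|g_n-g_m\|_{L^\infty(\R^d)}
\]
for every $t\geqslant 0$, so $(u_n)$ is Cauchy in $L^\infty(\ccyl)$, hence converges uniformly to a continuous function $u$. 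Stability of viscosity solutions under uniform convergence makes $u$ a solution of (HJ$_1$) with $u(0,\cdot)=g$, and the uniform-in-$n$ time estimate $|u_n(t,x)-g_n(x)|\leqslant a+tM_a$ (obtained by regularizing $g_n$ further to deduce (*) only in terms of $\omega_{g}$, $\alpha$, $\beta$) transfers to $u$ in the limit, placing $u\in\K_*$.

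The main obstacle, in my view, is the global uniform continuity of $u$ on $\ccyl$. Time regularity is not problematic once one has (*) uniformly in $n$, together with the semigroup property that follows from uniqueness. The delicate point is propagating the spatial modulus $\omega_g$ of $g$ through the evolution despite the fact that $H(x,p)$ is $x$-dependent so translation does not preserve the equation. I would attack this by a second doubling-variables argument applied to
\[
\Phi(t,x,y):=u(t,x)-u(t,y)-\widetilde\omega(|x-y|)-\eta t,
\]
where $\widetilde\omega$ is a suitable smooth concave majorant of $\omega_g$ chosen so that $\widetilde\omega'$ stays bounded. Testing the sub/super inequalities at the maximizer and invoking (H1) at the slope $\widetilde\omega'(|\bar x-\bar y|)$ shows that a positive supremum would force $\eta\leqslant\omega_{H,R}(|\bar x-\bar y|)$ with $|\bar x-\bar y|\to 0$ under the penalization, contradicting $\eta>0$. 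The resulting modulus of $u$ in $x$ is uniform in $t$, and combined with the time regularity it gives $u\in \D{UC}(\ccyl)$. The same doubling-variables scheme, carried out with the Lipschitz modulus $\omega(r)=Lr$ and the quantitative form above, delivers the final assertion that the Lipschitz constant of $u$ depends only on $\|Dg\|_{L^\infty(\R^d)}$, $\alpha$ and $\beta$.
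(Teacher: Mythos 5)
Your overall blueprint (comparison = Proposition~\ref{prop first order comparison}, Lipschitz data first, then density + the contraction in $\K_*$) matches the paper's, but the core existence/regularity step is handled very differently. The paper does not prove existence by Perron at all: it modifies the Hamiltonian to $H_n(x,p):=\max\{H(x,p),2\beta(|p|)-n\}$, which makes $H_n$ independent of $x$ for large $|p|$ and hence satisfy a structure condition from Barles' book, invokes the Lipschitz existence theorem \cite[Theorem~8.2]{barles2} for the shifted Hamiltonian, and then uses the barrier bound $\|\partial_t\tilde u\|_\infty\leqslant 2C$ together with the equation to keep the gradients inside the region where $H_n=H$. This is precisely what lets the paper get a spatial Lipschitz bound depending \emph{only} on $\|Dg\|_\infty$, $\alpha$, $\beta$: the mechanism is ``time-Lipschitz from barriers $\Rightarrow H(x,D_xu)\leqslant 2C$ (in the viscosity sense) $\Rightarrow$ coercivity bounds $|D_xu|$.''

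This is where your sketch has a genuine gap. Your space-doubling estimate chooses $L'$ ``depending on $\alpha$, $\beta$ and the modulus of $H$ in $x$-variable on $B_{L'}$ from (H1).'' Besides being circular, this makes the Lipschitz constant depend on the modulus of $H$ in $x$, which the statement of the theorem explicitly excludes. Moreover, the usual ``subtract the sub/super inequalities and use (H1)'' route does not close here, because the maximizer $(\bar t,\bar x,\bar y)$ need not have $|\bar x-\bar y|$ small, so the modulus bound $|H(\bar x,L'\nu)-H(\bar y,L'\nu)|\leqslant\omega_{L'}(|\bar x-\bar y|)$ is useless. What actually works is to discard (H1) entirely at this stage and exploit only the time-Lipschitz bound $|u(t,x)-g(x)|\leqslant Ct$ (which you do establish from the barriers): this controls $|\bar t-\bar s|/\mu$ in the doubled-in-time formulation, and then the \emph{single} subsolution inequality $\frac{\bar t-\bar s}{\mu}+\eta+H(\bar x,L'\nu+\cdots)\leqslant 0$ together with the coercivity $H\geqslant\alpha(L'-1)$ produces a contradiction for $L'$ large depending only on $C$ (hence only on $\alpha$, $\beta$, $\|Dg\|_\infty$). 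So the idea you name — coercivity — is the right one, but the way you propose to implement it (via the $x$-modulus and the difference of the two inequalities) is the wrong one. Also, your final doubling argument for global uniform continuity is superfluous: once $u_n\to u$ uniformly on $\ccyl$ with each $u_n$ Lipschitz (hence uniformly continuous), the uniform limit is automatically uniformly continuous, which is the observation the paper uses.
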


\begin{proof}
Let us first assume $g\in \Lip(\R^d)$. Take a function 
$f\in\D{C}^1(\R^d)\cap \Lip(\R^d)$  such that $\|g-f\|_\infty<1$ and a constant $C$ satisfying 
\begin{equation*}
C>\max\{-\alpha(\|Dg\|_\infty), \beta(\|Dg\|_\infty)\}. 
\end{equation*}
Notice that, in particular, $C>\|H(x,Dg(x))\|_\infty$. 
Choose $n\in\N$ large enough so that the Hamiltonian
\[
H_n(x,p):=\max\left\{H(x,p),2\beta(|p|)-n \right\},\qquad\hbox{$(x,p)\in\R^d\times\R^d$}
\]
satisfies
\begin{eqnarray}\label{app modified Ham}
H_n=H\qquad \hbox{on $U:=\big\{(x,p)\in\R^d\times\R^d:\, H_n(x,p)< 3C\,\big\}$}.
\end{eqnarray}
The modified Hamiltonian $H_n$ satisfies the additional condition (H1) in \cite[p.\,64]{barles2}, thus
we can apply  \cite[Theorem 8.2]{barles2} and infer the existence of a function $\tilde u\in\Lip(\ccyl)$ which solves (HJ$_1$) in 
$\ccyl$ with $\widetilde H(x,p)=H_n(x,p+D_x f)$ in
place of $H$ and initial condition $\tilde u(0,\cdot)=g-f$ on $\R^d$. Moreover, it is not hard to see that 
the functions $g(x)-f(x)-2Ct$ and $g(x)-f(x)+2Ct$ are, respectively, a sub and a supersolution of (HJ$_1$) in 
$\ccyl$ with $\widetilde H$ in place of $H$. By arguing as in the proof of \cite[Theorem 8.2]{barles2} we obtain 
$\|\partial_t \tilde u\|_\infty\leqslant 2C$, in particular  
\begin{equation}\label{uff}
 H_n(x,D_x \tilde u(t,x)+Df(x))\leqslant 2C\qquad\hbox{for a.e. $(t,x)\in\cyl$}.
\end{equation}
Then the function $u:=\tilde u+f$ belongs to $\Lip(\ccyl)$ and solves
(HJ$_1$) with $H_n$ in place of $H$ and initial condition
$u(0,\cdot)=g$ on $\R^d$. It is a standard fact that \eqref{uff}
implies that $(x,D_x\phi(t,x))\in U$ for any sub or supertangent
$\phi$ to $u$ at $(t,x)\in\cyl$. In view of \eqref{app modified Ham},
we finally infer that $u$ is a solution of (HJ$_1$) in $\ccyl$ as
well.

Let now assume that $g\in\D{UC}(\R^d)$. Then by Lemma~\ref{lemma density} there exists a sequence of functions $g_n\in \Lip(\R^N)$ uniformly converging to $g$ on $\R^d$. Let us denote by $u_n$ the corresponding Lipschitz solution to \eqref{app hj} with initial datum $g_n$. 
By Proposition \ref{prop general Lip comparison}  we have
\[
\|u_m-u_n\|_{L^\infty(\ccyl)}
\leqslant
\|g_m-g_n\|_{L^\infty(\R^d)},
\]
that is, $(u_n)_n$ is a Cauchy sequence in $\ccyl$ with respect to the sup-norm. Hence the Lipschitz continuous functions $u_n$ uniformly converge to a function $u$ on $\ccyl$, which is therefore uniformly continuous. By the stability of the notion of viscosity solution, we conclude that $u$ is a solution of \eqref{app hj} with initial datum $g$. The remainder of the assertion is a straightforward consequence of Proposition \ref{prop first order comparison}.
\end{proof}

In the case $A\not\equiv 0$, we need to introduce additional assumptions on the Hamiltonian 
to be sure that the Cauchy problem for the viscous equation \eqref{app hj} is
$\K_*$-well-posed. Our examples in Section~\ref{sez stationary ergodic
  homogenization} will satisfy these conditions.
\begin{definizione}\label{Fam}
  We shall say that a function $H\in \D{C}(\R^d\times\R^d)$ belongs to
  the class $\Ham$ for some constants $\alpha_0,\beta_0>0$ and $\gamma>1$
  if it satisfies the following inequalities:
\begin{itemize}
\item[(i)] \quad $\alpha_0|p|^\gamma-1/\alpha_0\leqslant H(x,p)\leqslant\beta_0(|p|^\gamma+1)$\ \ for all $x,p\in\R^d$;\medskip
\item[(ii)] \quad $|H(x,p)-H(y,p)|\leqslant\beta_0(|p|^\gamma+1)|x-y|$\ \ for all $x,y,p\in\R^d$;\medskip
\item[(iii)] \quad
  $|H(x,p)-H(x,q)|\leqslant\beta_0\left(|p|+|q|+1\right)^{\gamma-1}|p-q|$\ \  for all $x,p,q\in\R^d$.\medskip
\end{itemize}
\end{definizione}
Clearly, any $H\in\Ham$ satisfies (H1)-(H2). The following comparison principle holds:

\begin{prop}\label{prop comp in Ham}
\begin{sloppypar}
Assume $A$ satisfies (A1)-(A2) and $H\in\Ham$. Let $v\in\D{USC}(\ccyl)$ and $w\in\D{LSC}(\ccyl)$ be, respectively, a sub and a supersolution of \eqref{app hj} belonging to $\K$ and such that either $v(0,\cdot)$ or $w(0,\cdot)$ is in $\D{UC}(\R^d)$.  Then \vspace{-3ex}
\end{sloppypar}
\[
v(t,x)-w(t,x)\leqslant \sup_{\,\R^d}\big(v(0,\cdot)-w(0,\cdot)\big)\quad\hbox{for every  $(t,x)\in\ccyl$.}  
\]
\end{prop}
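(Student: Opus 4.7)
My plan is to follow the two-step strategy of the proof of Proposition \ref{prop general Lip comparison}: first reduce to a bounded upper semicontinuous subsolution with a controlled lower bound on the supersolution, and then run a doubling variable comparison argument tailored to the superlinear growth encoded in the class $\Ham$.

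For the reduction, I would add a constant to $w$ so that $\sup_{\R^d}(v(0,\cdot)-w(0,\cdot))=0$ and assume (the other case being symmetric) that $v(0,\cdot)\in\D{UC}(\R^d)$. Since $v\in\K$, pick $f\in\D{UC}(\R^d)$ with $|v-f|\leqslant C_T$ on $\cTcyl$ for every $T>0$; by Lemma \ref{lemma density}, find $g\in\D{C}^\infty(\R^d)\cap\Lip^3(\R^d)$ with $\|f-g\|_\infty<1$, and set $\tilde v:=v-g$, $\tilde w:=w-g$. Then $\tilde v$ is a bounded upper semicontinuous subsolution on every $\cTcyl$, $\tilde w$ is a lower semicontinuous supersolution satisfying $\tilde w(t,x)\geqslant\tilde v(0,x)-2C_T$ on $\cTcyl$, and both solve the modified equation with Hamiltonian
\[
\tilde H(x,p):=-\Tr\bigl(A(x)D_x^2 g(x)\bigr)+H(x,p+D_x g(x)),
\]
which still belongs to some class $\Hamtilde$ thanks to the boundedness and Lipschitz regularity of $D_x g$ and $D_x^2 g$. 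Note also that $\tilde v(0,\cdot)\in\D{UC}(\R^d)$, hence of at most linear growth.

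For the comparison between $\tilde v$ and $\tilde w$ on a fixed strip $[0,T)\times\R^d$, I would maximize
\[
\Phi(t,x,y):=\tilde v(t,x)-\tilde w(t,y)-\frac{\Psi(|x-y|)}{\eps}-\eta\bigl(|x|^2+|y|^2\bigr)-\frac{\sigma}{T-t},
\]
where $\Psi$ is a smooth nonnegative penalty matched to the growth exponent $\gamma$, and $\eps,\eta,\sigma>0$ are small parameters. The boundedness of $\tilde v$ and the at-most-linear lower bound on $\tilde w$, combined with the coercive $\eta$-term, guarantee that the supremum is attained at some $(t^*,x^*,y^*)$; the $\sigma$-term excludes $t^*=T$, while the uniform continuity of $\tilde v(0,\cdot)$ together with $\sup(\tilde v(0,\cdot)-\tilde w(0,\cdot))\leqslant 0$ excludes $t^*=0$ for $\eps,\eta$ small. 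The parabolic Crandall--Ishii lemma then produces second-order jets of $\tilde v$ at $(t^*,x^*)$ and $\tilde w$ at $(t^*,y^*)$, and Ishii's matrix inequality combined with (A2) controls the trace contribution from $A$.

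The main obstacle, as is typical in the superlinear viscous setting, is to close the estimate on $\tilde H(x^*,p)-\tilde H(y^*,q)$. Splitting it as $[\tilde H(x^*,p)-\tilde H(x^*,q)]+[\tilde H(x^*,q)-\tilde H(y^*,q)]$, property (iii) bounds the first piece by $\tilde\beta_0(|p|+|q|+1)^{\gamma-1}|p-q|$ with $|p-q|=O(\eta)$, while (ii) bounds the second by $\tilde\beta_0(|q|^\gamma+1)|x^*-y^*|$. For $\gamma>1$ the quadratic penalty used in Proposition \ref{prop general Lip comparison} is no longer sufficient: one must choose $\Psi$ to grow faster and additionally exploit the coercive lower bound from (i), applied to the subsolution inequality at $(t^*,x^*)$, to control $|p|$ in terms of the penalty derivative and absorb these polynomial cross-terms. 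Once this bookkeeping is carried out, letting $\eps,\eta\to 0^+$ with $\sigma$ fixed produces a contradiction with the $\sigma/(T-t^*)$ term, yielding $\tilde v\leqslant\tilde w$ on $[0,T)\times\R^d$ and, as $T$ is arbitrary, on $\ccyl$.
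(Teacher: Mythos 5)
Your reduction step — adding a constant so $\sup(v(0,\cdot)-w(0,\cdot))=0$, choosing $g\in\D{C}^\infty(\R^d)\cap\Lip^3(\R^d)$ via Lemma~\ref{lemma density}, and passing to $\tilde v,\tilde w$ and the shifted Hamiltonian $\tilde H(x,p)=-\Tr(A(x)D_x^2g)+H(x,p+D_xg)$, which remains in some $\Hamtilde$ — is exactly the reduction the paper uses: the paper's proof reads ``argue as in Proposition~\ref{prop general Lip comparison}, using \cite[Theorem~3.1]{D16} in place of \cite[Proposition~1.4]{D16}.'' Where you diverge is in what comes next: the paper closes the argument by citing \cite[Theorem~3.1]{D16} to compare the reduced pair $\tilde v,\tilde w$, whereas you propose a self-contained doubling-of-variables comparison. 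In principle unpacking the black box is a legitimate alternative route; it makes the paper more self-contained at the cost of length.

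The problem is that the part you defer to ``once this bookkeeping is carried out'' is not bookkeeping — it is the entire substance of \cite[Theorem~3.1]{D16}. You correctly identify the obstacle: after Crandall--Ishii, you must absorb $\tilde\beta_0(|q|^\gamma+1)|x^*-y^*|$ and $\tilde\beta_0(|p|+|q|+1)^{\gamma-1}|p-q|$ using a suitably chosen penalty $\Psi$ together with a gradient bound extracted from coercivity (i) applied to the subsolution inequality. But you give no candidate $\Psi$, no argument that the coercivity bound on $|p|$ is compatible with the growth of $\Psi'$, and no verification that the quantities $\Psi(|x^*-y^*|)\leqslant C\eps$ and $|p|,|q|\sim\Psi'(|x^*-y^*|)/\eps$ combine to make the cross terms vanish as $\eps,\eta\to0$. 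For $\gamma>1$ these estimates must be threaded carefully — this is precisely the delicate step that distinguishes the viscous superlinear case from the Lipschitz case handled by Proposition~\ref{prop general Lip comparison}. As written, the proposal reproduces the paper's reduction and then replaces the cited theorem with an unverified sketch, so there is a genuine gap at the heart of the argument. Either carry out the penalty construction and estimate chain explicitly, or do what the paper does and invoke \cite[Theorem~3.1]{D16}.
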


The assertion follows by arguing as in the proof of Proposition \ref{prop general Lip comparison} and by 
using  \cite[Theorem 3.1]{D16} in place of \cite[Proposition 1.4]{D16}.

From \cite{D16} we infer the following result.

\begin{teorema}\label{teo main HJ inf H}
  Let $A$ satisfy (A1)-(A2) and $H\in\Ham$. Then the 
  Cauchy problem for \eqref{app hj} is $\K_*$-well-posed. Moreover,
  for every $g\in\D{UC}(\R^d)$, the solution $u$ belongs to
  $\D{UC}(\ccyl)$. If
  $g\in\Lip^2(\R^d)$, then the solution $u$ is
  Lipschitz continuous on $\ccyl$.
\end{teorema}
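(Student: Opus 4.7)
The strategy mirrors that of Theorem \ref{teo well posed first order}, but the viscous second-order term forces us to start the existence construction at the stricter regularity class $\Lip^2(\R^d)$ rather than $\Lip(\R^d)$, since one now needs to control $\Tr(A(x)D_x^2 g)$ when shifting by the initial datum. The comparison part (b) of $\K_*$-well-posedness is immediate from Proposition \ref{prop comp in Ham}, so all the work goes into existence and into propagating the regularity from the Lipschitz setting to the general uniformly continuous one.

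\emph{Step 1 (existence for $g\in\Lip^2(\R^d)$).} Following the template of Proposition \ref{prop general Lip comparison}, I would replace $u$ by $\tilde u:=u-g$, which converts \eqref{app hj} into the same type of equation with modified Hamiltonian
$\widetilde H(x,p):=-\Tr(A(x)D_x^2 g(x))+H(x,p+D_x g(x))$
and with zero initial datum. Because $g\in\Lip^2(\R^d)$ and $A$ satisfies (A1)-(A2), the estimates defining the class $\Ham$ are inherited by $\widetilde H$ (with modified constants), so $\widetilde H\in\mathcal{H}(\gamma,\tilde\alpha_0,\tilde\beta_0)$ for suitable $\tilde\alpha_0,\tilde\beta_0$. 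The existence of a Lipschitz solution $\tilde u$ on $\ccyl$ now follows from the results of \cite{D16} cited right before the theorem statement; undoing the shift yields a Lipschitz solution $u$ to \eqref{app hj} with $u(0,\cdot)=g$. A two-sided sub-/supersolution sandwich of the form $g(x)\pm Ct$, with $C$ chosen so large that $|H(x,D_x g)|+|\Tr(A D_x^2 g)|\le C$, gives the bound $\|u(t,\cdot)-g\|_\infty\le Ct$, in particular the Lipschitz bound in $t$, and confirms \eqref{def K*} and $u\in\K_*$.

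\emph{Step 2 (passage to $g\in\D{UC}(\R^d)$).} By Lemma \ref{lemma density} applied with $k=2$, I can choose a sequence $g_n\in\D{C}^\infty(\R^d)\cap\Lip^2(\R^d)$ with $g_n\to g$ uniformly on $\R^d$. Step 1 provides Lipschitz solutions $u_n\in\K_*$. The comparison estimate of Proposition \ref{prop comp in Ham} yields
\[
\|u_n(t,\cdot)-u_m(t,\cdot)\|_{L^\infty(\R^d)}\le \|g_n-g_m\|_{L^\infty(\R^d)}\qquad\text{for all }t\ge 0,
\]
so $(u_n)$ is Cauchy for the sup-norm on $\ccyl$ and converges uniformly to some bounded-perturbation-of-$g$ function $u$. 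Stability of viscosity solutions under uniform convergence gives that $u$ solves \eqref{app hj} with $u(0,\cdot)=g$.

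\emph{Step 3 (uniform continuity of the solution and membership in $\K_*$).} The Lipschitz bound on $u_n$ in $t$ depends on $\|D_x g_n\|_\infty$ and $\|D_x^2 g_n\|_\infty$, hence it is not uniform in $n$, but I only need uniform continuity of the limit. A standard doubling-of-variables argument, together with the modulus of continuity of $g$ and the comparison principle applied to the space-translates $u(t,x+h)$, transfers the modulus of continuity of $g$ in $x$ to $u(t,\cdot)$ uniformly in $t$; combining this with the time estimate coming from the translation-in-time argument (which in turn uses the bound from Step 1 on approximating Lipschitz data) yields $u\in\D{UC}(\ccyl)$ and also verifies \eqref{def K*}, hence $u\in\K_*$. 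The final Lipschitz claim when $g\in\Lip^2(\R^d)$ is exactly Step 1.

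The main obstacle I anticipate is the uniform-continuity-in-time estimate at $t=0$ in Step 3: for the viscous equation one cannot invoke finite speed of propagation, so the bound on $u(t,x)-g(x)$ has to come from a careful comparison with barriers that take the second-order term into account. For smooth Lipschitz data this is easy (Step 1), but transferring the bound to general $g\in\D{UC}(\R^d)$ requires exploiting that the modulus of continuity of $u$ on $\ccyl$ is controlled by the modulus of continuity of $g$, which is again where the comparison principle of Proposition \ref{prop comp in Ham} applied to translated solutions does the essential work.
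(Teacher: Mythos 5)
Your overall shape — shift by the initial datum, invoke \cite{D16} for existence, then pass to general $\D{UC}$ data by approximation and the contraction property — is reasonable, but Step 1 contains a real gap, and the way the paper deploys the shift is structurally different and avoids it.

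In Step 1 you shift by $g\in\Lip^2(\R^d)$ itself and claim that the modified Hamiltonian $\widetilde H(x,p)=-\Tr(A(x)D_x^2 g(x))+H(x,p+D_xg(x))$ stays in a class $\mathcal{H}(\gamma,\tilde\alpha_0,\tilde\beta_0)$. This fails for condition (ii) of Definition~\ref{Fam}: that inequality requires $\widetilde H$ to be Lipschitz in $x$ uniformly in $p$, and the lower-order term $-\Tr\big(A(x)D_x^2 g(x)\big)$ is Lipschitz in $x$ only if $D_x^2 g$ is Lipschitz, i.e.\ only if $g\in\Lip^3(\R^d)$. For $g\in\Lip^2$ the second derivatives are merely bounded measurable, and for $g\in\D{C}^\infty\cap\Lip^2$ (your Step 2 approximants) they are continuous but not globally Lipschitz. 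In both cases the inherited-class claim does not follow, so the appeal to \cite{D16} is not justified as written. The paper avoids this by never shifting by $g$ or by a $\Lip^2$ approximant: it fixes a \emph{separate} smooth function $f\in\D{C}^\infty(\R^d)\cap\Lip^3(\R^d)$ with $\|g-f\|_\infty<1$, shifts by $f$ (so the modified Hamiltonian genuinely lands in a class $\Hamtilde$), and applies Theorem~3.2 of \cite{D16} directly to the bounded, uniformly continuous initial datum $g-f$, which already yields the $\D{UC}(\ccyl)$ solution without any further limiting procedure. For the Lipschitz claim when $g\in\Lip^2(\R^d)$, they also require $\|Df-Dg\|_\infty<1$ and $\|D^2f-D^2g\|_\infty<2\|D^2g\|_\infty$, which makes the constants in $\Hamtilde$ controlled by $\|Dg\|_\infty$ and $\|D^2g\|_\infty$ only, as recorded later in Remark~\ref{oss Lip dependence}.

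Two further remarks. Your Step 2 (approximate by smooth Lipschitz data, use the comparison contraction to get a Cauchy sequence, pass to the limit) is exactly the mechanism the paper uses in the first-order Theorem~\ref{teo well posed first order}, and it would indeed give $u\in\D{UC}(\ccyl)\cap\K_*$ once Step 1 is repaired, since a uniform limit of Lipschitz (hence uniformly continuous) functions is uniformly continuous and the $\K_*$ bound is transferred by picking $n$ large enough; so Step 3 is essentially subsumed by Step 2. But the specific mechanism you propose in Step 3 — comparing $u$ to its space-translates $u(t,\cdot+h)$ — does not apply verbatim here: the coefficients $A(x)$ and $H(x,p)$ are $x$-dependent, so $u(t,\cdot+h)$ solves a translated equation, not the original one, and the discrepancy is not controlled uniformly in time. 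The paper sidesteps all of this by importing the uniform continuity directly from \cite{D16}.
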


\begin{proof}
 Given $g\in \D{UC}(\R^d)$, we pick $f\in\D{C}^\infty(\R^d)\cap {\Lip^3}(\R^d)$ such that $\|g-f\|_\infty<1$. 
According to Theorem 3.2 in \cite{D16}, there exists $\tilde u\in\D{UC}(\ccyl)$ that solves \eqref{app hj} with modified Hamiltonian $\tilde H(x,p)=-\D{tr}(A(x)D^2_x f)+H(x,p+D_x f)$ and initial datum $g-f$. Moreover, $\tilde u$ is bounded in cylinders of the form $\cTcyl$, for every fixed $T>0$. It is easily seen that $u:=\tilde u+ f$ is in $\D{UC}(\ccyl)\cap\K\subset\K_*$ and solves \eqref{app hj} with initial condition $u(0,\cdot)=g$ on $\R^d$.

If $g\in \Lip^2(\R^d)$, then we can ensure that $f$ also satisfies the inequalities
\begin{equation}\label{estimates derivatives f}
\|Df-Dg\|_\infty< 1,\qquad \|D^2f-D^2g\|_\infty< 2\|D^2g\|_\infty.
\end{equation}
In view of Theorem 3.2 in \cite{D16}, we conclude that $\tilde u$, and hence $u$, are Lipschitz continuous in $\ccyl$. 
The remainder of the assertion is a straightforward consequence of Proposition \ref{prop comp in Ham}.
\end{proof}

\begin{oss}\label{oss Lip dependence}
From Theorem 3.2 in \cite{D16} and in view of \eqref{estimates derivatives f}, we can also infer that the 
Lipschitz constant $\kappa$ of the solution $u$ with initial datum $g\in{\Lip^2}(\R^d)$ is a locally bounded function of $\|Dg\|_{L^\infty(\R^d)}$ and $\|D^2 g\|_{L^\infty(\R^d)}$.
More precisely,  
$\kappa=\widetilde\kappa\left(\|Dg\|_{L^\infty(\R^d)}\right.,$ $\left.\|D^2 g\|_{L^\infty(\R^d)}\right)$ for some function $\widetilde k:[0,+\infty)\times[0,+\infty)\to\R$ which only depends on $\Lambda_A$, $\alpha_0,\,\beta_0$ and $\gamma>1$ and is locally bounded in its arguments. This remark will be needed in Section \ref{sez non convex homogenization}. 
\end{oss}

\section{Homogenization: from linear to general initial data}\label{sez general}

In this section we consider viscous and non-viscous Hamilton-Jacobi
equations of the form (HJ$_\epsilon$) where the matrix $A$ and the
Hamiltonian $H$ satisfy (A1)-(A2) and (H1)-(H2) respectively. We
shall also assume that the Cauchy problem for \eqref{app hj} is
$\K_*$-well-posed, in the sense of Definition \ref{def well posed}.
Note that this is equivalent to the $\K_*$-well-posedness of the
Cauchy problem for (HJ$_\epsilon$), for some (and, thus,
  for all) $\eps>0$. It suffices to remark that $u\in\K_*$ is a
continuous solution to \eqref{app hj} if and only if the function
$u^\eps(t,x):=\epsilon u(t/\epsilon,x/\epsilon)$ is a continuous
solution to \eqref{intro viscous hj} which belongs to $\K_*$.

We are interested in the asymptotic behavior of the solutions  to \eqref{intro viscous hj} when $\eps\to 0^+$. We would like to show that, in order to establish a homogenization result for \eqref{intro viscous hj}, it is sufficient to consider only linear initial data. 
%
%
%
%
To this aim, for every fixed $\theta\in\R^d$ and $\eps>0$, we denote by $u_\theta$ and $u^\eps_\theta$ the unique continuous functions in $\K_*$ that solve  \eqref{app hj} and \eqref{intro viscous hj} respectively subject to the initial condition $u_\theta(0,x)=u^\eps_\theta(0,x)=\langle \theta,x\rangle$. 

The main result of this section is the following theorem. 
\begin{teorema}\label{app teo hom}
  Let $A$ and $H$ satisfy hypotheses (A1)-(A2) and (H1)-(H2),
  respectively. Assume that the Cauchy problem for \eqref{app hj} is
  $\K_*$-well-posed, and that either one of the following conditions
  is satisfied:\smallskip
\begin{itemize}
\item[(H3)] there exists a modulus of continuity $m$ such that 
\[
|H(x,p_{1})-H(x,p_2)|\leqslant m(|p_1-p_2|)\qquad\hbox{for all $x\in\R^d$\ and\ $p_1,p_2\in\R^d$;\medskip}
\]
\item[(L)\ \ ] for every $\theta\in\R^d$, there exists a constant $\kappa=\kappa(\theta)$ such that 
\[
|u_\theta(t,x)-u_\theta(t,y)|\leqslant \kappa |x-y|\qquad\hbox{for all $x,y\in\R^{d}$ and $t\geqslant 0$.}
\]
\end{itemize}
Finally, suppose that there exists a continuous and coercive Hamiltonian $\overline H:\R^d\to\R$ such that, for every $\theta\in\R^d$
\begin{equation}\label{app hom linear data}
u^\eps_\theta(t,x)\ {\ucv}\ \langle \theta,x\rangle-t\overline H(\theta)\quad\hbox{in $\ccyl$ as $\epsilon\to 0$.}
\end{equation}
Then, for every  $g\in\D{UC}(\R^d)$, the unique
continuous function $u^\eps$ in $\K_*$ solving \eqref{intro viscous
  hj} with initial
condition $u^\eps(0,\cdot)=g$ converges, locally uniformly in $\ccyl$  
as $\eps\to 0^+$, to the unique solution $\overline u\in\D{UC}(\ccyl)$
of
\begin{equation}\label{app effective eq}
\partial_t\overline u+\overline H(D_x\overline u)=0\qquad\hbox{in $\cyl$}
\end{equation}
with the initial condition $\overline u(0,\cdot)=g$.
\end{teorema}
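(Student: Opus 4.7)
The plan is to apply the method of half-relaxed limits combined with a variant of Evans' perturbed test function technique, in which the family $\{u^\eps_\theta\}_{\theta\in\R^d}$ itself plays the role of time-dependent correctors. As a preliminary step, I would establish that $\{u^\eps\}_{\eps>0}$ is locally uniformly bounded on $\ccyl$ by comparing $u^\eps$ with affine sub/supersolutions built out of the uniform continuity of $g$ and the coercivity in (H2); this ensures that the half-relaxed limits
\[
\overline u^*(t,x):=\limsup_{(s,y,\eps)\to (t,x,0^+)} u^\eps(s,y),\qquad \overline u_*(t,x):=\liminf_{(s,y,\eps)\to (t,x,0^+)} u^\eps(s,y)
\]
are finite $\D{USC}/\D{LSC}$ functions on $\ccyl$ with $\overline u_*\leqslant \overline u^*$.

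The crucial step is to show that $\overline u^*$ is a viscosity subsolution of \eqref{app effective eq} (the statement for $\overline u_*$ being a supersolution is symmetric). Suppose otherwise: there exist $\phi\in\D{C}^2(\cyl)$ and $(t_0,x_0)\in\cyl$ such that $\phi$ is a strict supertangent to $\overline u^*$ at $(t_0,x_0)$ and
\[
\phi_t(t_0,x_0)+\overline H(D_x\phi(t_0,x_0))=:\delta>0.
\]
Set $\theta:=D_x\phi(t_0,x_0)$ and $\psi(t,x):=\phi(t,x)-\langle\theta,x\rangle+t\overline H(\theta)$, so that $\psi_t(t_0,x_0)=\delta$ and $D_x\psi(t_0,x_0)=0$. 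By \eqref{app hom linear data}, the perturbed test function $\phi^\eps:=\psi+u^\eps_\theta$ converges to $\phi$ locally uniformly, hence on a fixed closed ball centered at $(t_0,x_0)$ the function $u^\eps-\phi^\eps$ attains a local maximum at some $(t_\eps,x_\eps)\to(t_0,x_0)$. Since $\phi^\eps$ is not $\D{C}^2$, I would access the relevant viscosity inequalities via Crandall-Ishii doubling: for each small $\alpha>0$, maximize
\[
u^\eps(t,x)-u^\eps_\theta(s,y)-\psi(t,x)-\frac{(t-s)^2+|x-y|^2}{2\alpha}
\]
(plus a small localizing term) on a fixed neighborhood of $(t_\eps,x_\eps)$, at a point $(t_\alpha,s_\alpha,x_\alpha,y_\alpha)$, and set $\sigma_\alpha:=(t_\alpha-s_\alpha)/\alpha$ and $q_\alpha:=(x_\alpha-y_\alpha)/\alpha$. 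Subtracting the subsolution inequality for $u^\eps$ at $(t_\alpha,x_\alpha)$ from the supersolution inequality for $u^\eps_\theta$ at $(s_\alpha,y_\alpha)$ yields
\[
\psi_t(t_\alpha,x_\alpha)+H(x_\alpha/\eps,\,q_\alpha+D_x\psi(t_\alpha,x_\alpha))-H(y_\alpha/\eps,q_\alpha)\leqslant \eps\,\Tr\!\bigl(A(x_\alpha/\eps)M_\alpha-A(y_\alpha/\eps)N_\alpha\bigr),
\]
for matrices $M_\alpha, N_\alpha$ produced by the Crandall-Ishii lemma; together with the Lipschitz bound on $\sigma$ from (A1)-(A2), that lemma dominates the right-hand side by a constant times $|x_\alpha-y_\alpha|^2/(\alpha\eps)$, which tends to $0$ as $\alpha\to 0^+$ for fixed $\eps>0$.

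Taking $\alpha\to 0^+$ first, coercivity (H2) and the supersolution inequality for $u^\eps_\theta$ keep $|q_\alpha|$ bounded, so (H1) gives $H(x_\alpha/\eps,q_\alpha)-H(y_\alpha/\eps,q_\alpha)\to 0$, while the remaining difference $H(x_\alpha/\eps,q_\alpha+D_x\psi)-H(x_\alpha/\eps,q_\alpha)$ is controlled by a modulus of $|D_x\psi|$ using either (H3) directly or (L) (which renders $u^\eps_\theta$ Lipschitz in $x$, hence confines $q_\alpha$ to a bounded set on which (H1) suffices). Letting then $\eps\to 0^+$, we have $(t_\eps,x_\eps)\to(t_0,x_0)$, whence $D_x\psi(t_\eps,x_\eps)\to 0$ and $\psi_t(t_\eps,x_\eps)\to\delta$, forcing $\delta\leqslant 0$: contradiction. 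For the initial condition $\overline u^*(0,\cdot)\leqslant g\leqslant \overline u_*(0,\cdot)$, I would build affine barriers from the uniform continuity of $g$ and exploit property $(*)$ in the definition of $\K_*$. Since $\overline H$ is continuous and coercive, Theorem~\ref{teo well posed first order} provides $\K_*$-well-posedness of \eqref{app effective eq}; the comparison principle then forces $\overline u^*\leqslant \overline u_*$, whence $\overline u^*=\overline u_*=\overline u$ and, consequently, locally uniform convergence $u^\eps\to\overline u$ on $\ccyl$.

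The main technical obstacle I foresee is the correct ordering of the limits $\alpha\to 0^+$ (at fixed $\eps$) and then $\eps\to 0^+$: the error term from the viscous contribution in Crandall-Ishii picks up a factor $1/\eps$ (from applying the Lipschitz continuity of $\sigma$ at the rescaled points $x_\alpha/\eps$ and $y_\alpha/\eps$), so one must verify that $|x_\alpha-y_\alpha|^2/\alpha$ vanishes fast enough as $\alpha\to 0^+$ for the viscous term to disappear before letting $\eps\to 0^+$. The dichotomy between (H3) and (L) intervenes only in bounding the $p$-increment of $H$: (H3) gives a uniform modulus, while (L) uses that $q_\alpha$ inherits the Lipschitz constant of $u_\theta$ and thus lies in a bounded range where (H1) alone suffices.
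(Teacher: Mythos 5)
Your high-level strategy coincides with the paper's: set up upper and lower relaxed half-limits $\overline u^*,\overline u_*$, prove they are sub- and supersolutions of the effective equation via a perturbed test function argument in which $v^\eps_\theta:=u^\eps_\theta-\langle\theta,\cdot\rangle+t\overline H(\theta)$ plays the role of a time-dependent approximate corrector, then conclude by comparison for the effective equation. The main divergence is in how the perturbed test function $\phi^\eps=\phi+v^\eps_\theta$ is handled. The paper does \emph{not} apply Crandall--Ishii doubling to the pair $(u^\eps,u^\eps_\theta)$ directly. Instead it first observes that, since $u^\eps_\theta$ is a solution of \eqref{intro viscous hj} and $\phi$ is $\D{C}^2$, the computation showing that $\phi^\eps$ is a \emph{strict} supersolution of \eqref{intro viscous hj} on a small neighborhood $V_r$ of $(t_0,x_0)$ is purely local and elementary (just shifting the test function by $\phi$); the dichotomy (H3)/(L) is only needed to absorb the small displacement $D_x\phi-\theta$ inside the $p$-slot of $H$. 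The heavy technical work (doubling, Ishii matrix inequalities) is then outsourced to a cited comparison principle for \eqref{intro viscous hj} on $V_r$ between $u^\eps$ and $\phi^\eps$, and the contradiction follows by taking $\limsup_{\eps\to0^+}$ of $\sup_{V_r}(u^\eps-\phi^\eps)\leqslant\max_{\partial V_r}(u^\eps-\phi^\eps)$, using the strict supertangency and $\phi^\eps\ucv\phi$. Your version inlines that comparison.

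Within that inlined comparison, one assertion is not justified as stated: that ``coercivity (H2) and the supersolution inequality for $u^\eps_\theta$ keep $|q_\alpha|$ bounded.'' The supersolution inequality gives a \emph{lower} bound on $H(y_\alpha/\eps,q_\alpha)$, namely $H(y_\alpha/\eps,q_\alpha)\geqslant-\sigma_\alpha+\eps\,\Tr(A(y_\alpha/\eps)N_\alpha)$, and the right-hand side tends to $-\infty$ as $\alpha\to0^+$ because Ishii's lemma only gives $N_\alpha\geqslant -C/\alpha$; a lower bound on $H$ combined with coercivity does not constrain $|q_\alpha|$. An upper bound on $H$ (which coercivity would convert into a bound on $|q_\alpha|$) would have to come from the subsolution inequality for $u^\eps$, but that produces $+\eps\,\Tr(A(x_\alpha/\eps)M_\alpha)$, which is likewise unbounded as $\alpha\to0^+$; only the combined trace $\Tr(A(x_\alpha/\eps)M_\alpha-A(y_\alpha/\eps)N_\alpha)$ is controlled. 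The correct source of a bound on $q_\alpha$ is a Lipschitz bound on one of the two doubled functions: under (L), $u^\eps_\theta(\cdot,\cdot)$ is equi-Lipschitz in $x$ (Remark~\ref{oss rescaling}), which gives $|q_\alpha|\leqslant 2\kappa(\theta)$ directly from the maximum condition, as you note in parentheses. Under (H3) alone this device is unavailable, and this is precisely the case where the paper relies on the comparison theorem of \cite{users} rather than re-deriving it; you would either have to invoke the same result, or restrict to $g\in\D{C}^2\cap\Lip^2$ (legitimate by density and the contraction property (b)) and use the Lipschitz bound on $u^\eps$ to control $q_\alpha$ — but beware that in the viscous case with only (H1)--(H2) the $\K_*$-well-posedness hypothesis does not by itself provide a Lipschitz bound on $u^\eps$. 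In short: same skeleton, but the paper's two-step structure (supersolution property of $\phi^\eps$ by a local shift argument, then a cited comparison on $V_r$) is what cleanly separates the dichotomy (H3)/(L) from the doubling machinery, and your merged version conflates them in a way that leaves the $q_\alpha$ bound unsupported under (H3).
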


\begin{oss}\label{oss rescaling}
  It is easy to see that, by uniqueness,
  $u^\eps_\theta(t,x)=\eps\, u_\theta(t/\eps,x/\eps)$. Therefore, the
  hypothesis (L) amounts to requiring that the functions \
  $\{u^\eps_\theta(t,\cdot)\,:\,0<\eps\leqslant 1,\, t\geqslant 0\,\}$\ are
  equi-Lipschitz in $\R^d$. A similar remark applies to condition
  (L$'$) below.
\end{oss}

To keep the proof of Theorem \ref{app teo hom} concise, we shall first
prove the following fact.

\begin{prop}\label{app prop hom}
  Let us assume that all the hypotheses of Theorem \ref{app teo hom}
  are in force. Let $g\in\D{UC}(\R^d)$ and, for every $\eps>0$, let us
  denote by $u^\eps$ the unique continuous function in $\K_*$ that solves \eqref{intro viscous hj}
  subject to the initial condition
  $u^\eps(0,\cdot)=g$. Set
\begin{eqnarray*}
u^*(t,x)&:=&\lim_{r\to 0}\ \sup\{ u^{\eps}(s,y)\,:\, (s,y)\in (t-r,t+r)\times B_r(x),\ 0<\eps<r\,\},\\
u_*(t,x)&:=&\lim_{r\to 0}\ \inf\{ u^\eps(s,y)\,:\, (s,y)\in (t-r,t+r)\times B_r(x),\ 0<\eps<r\,\}.
\end{eqnarray*}
Let us assume that $u^*$ and $u_*$ are finite valued. Then\smallskip
\begin{itemize}
\item[(i)] $u^*\in\D{USC}(\ccyl)$ and it is a viscosity subsolution of \eqref{app effective eq};\medskip
\item[(ii)] $u_*\in\D{LSC}(\ccyl)$ and it is a viscosity supersolution of \eqref{app effective eq}.\\
\end{itemize}
\end{prop}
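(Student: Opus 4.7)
I would prove (i) via a variant of Evans's perturbed test function method; (ii) then follows by an entirely symmetric argument, using a sup- in place of an inf-convolution. The upper semicontinuity of $u^*$ and the lower semicontinuity of $u_*$ are automatic from the definition of the relaxed semi-limits, so the heart of the matter is the viscosity inequality at an interior test point.

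Suppose $\phi\in\D{C}^2(\cyl)$ is a strict supertangent to $u^*$ at $(t_0,x_0)$, and assume, for contradiction, that $\partial_t\phi(t_0,x_0)+\overline H(\theta)>0$, where $\theta:=D_x\phi(t_0,x_0)$. The cornerstone of the argument is the \emph{time-dependent corrector}
\[ v_\theta(t,x):=u_\theta(t,x)-\langle\theta,x\rangle+t\,\overline H(\theta), \]
which is a viscosity solution of the cell problem
\[ \partial_t v - \Tr(A(x)D^2 v)+H(x,\theta+Dv)=\overline H(\theta)\quad\text{on }\cyl, \]
and, by hypothesis \eqref{app hom linear data}, satisfies $\eps v_\theta(t/\eps,x/\eps)=u^\eps_\theta(t,x)-\langle\theta,x\rangle+t\overline H(\theta)\to 0$ locally uniformly on $\ccyl$.

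Next I regularize $v_\theta$ by a parabolic inf-convolution $v_\theta^\delta$; it is semiconcave, twice differentiable almost everywhere by Alexandrov's theorem, converges locally uniformly to $v_\theta$ as $\delta\to 0$, and at each Alexandrov point satisfies the classical inequality
\[ \partial_\tau v_\theta^\delta(\tau,y)-\Tr(A(y)D^2v_\theta^\delta(\tau,y))+H(y,\theta+Dv_\theta^\delta(\tau,y))\geq \overline H(\theta)-\omega(\delta), \]
with $\omega(\delta)\to 0$ as $\delta\to 0$. Taking $\delta=\delta(\eps)\to 0$ slowly enough, the perturbed test function $\phi^\eps(t,x):=\phi(t,x)+\eps v_\theta^\delta(t/\eps,x/\eps)$ is semiconcave and converges to $\phi$ locally uniformly. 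Standard stability for relaxed limits then produces a local maximum $(t_\eps,x_\eps)\to(t_0,x_0)$ of $u^\eps-\phi^\eps$, which via Jensen's lemma may be taken to be an Alexandrov point of $\phi^\eps$ up to an infinitesimal linear correction. Setting $\tau_\eps:=t_\eps/\eps$, $y_\eps:=x_\eps/\eps$, computing the three derivatives of $\phi^\eps$, and subtracting the inequality above for $v_\theta^\delta$ at $(\tau_\eps,y_\eps)$ from the viscosity subsolution inequality for $u^\eps$ at $(t_\eps,x_\eps)$, the singular terms $\partial_\tau v_\theta^\delta$ and $\Tr(A(y_\eps)D^2 v_\theta^\delta)$ cancel exactly, yielding
\[ \partial_t\phi(t_\eps,x_\eps)+\overline H(\theta)+\big[H(y_\eps,D_x\phi+Dv_\theta^\delta)-H(y_\eps,\theta+Dv_\theta^\delta)\big]\leq \omega(\delta)+\eps\,\Tr(A(y_\eps)D_x^2\phi). \]
The bracketed $H$-difference tends to $0$: under (H3) it is bounded by $m(|D_x\phi(t_\eps,x_\eps)-\theta|)$, whereas under (L) the uniform bound $|D_y v_\theta^\delta|\leq\kappa(\theta)$ lets one invoke the local uniform continuity of $H$ in $p$ provided by (H1). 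Passing to the limit $\eps\to 0$, $\delta(\eps)\to 0$, gives $\partial_t\phi(t_0,x_0)+\overline H(\theta)\leq 0$, the desired contradiction.

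\textbf{Main obstacle.} The central difficulty is that $v_\theta$ is only a viscosity solution and cannot serve directly as a smooth test function; moreover its Hessian $D^2 v_\theta^\delta$ enters $\phi^\eps$ with a singular prefactor $1/\eps$, a genuinely new obstacle compared to the first-order setting of \cite{LPV}. The inf-convolution plus Jensen's lemma resolves the regularity issue (in both time and space), while the singular matrix term cancels precisely against $-\Tr(A(y_\eps)D^2 v_\theta^\delta)$ arising from the very cell-problem inequality being combined with the equation for $u^\eps$---a cancellation engineered into the definition of $\phi^\eps$. A subsidiary but crucial point is that the rescaling sends the corrector's arguments $(\tau_\eps,y_\eps)$ to macroscopic infinity, so the uniform smallness of $\eps v_\theta(t/\eps,x/\eps)$ built into hypothesis \eqref{app hom linear data}, rather than mere uniform continuity of $v_\theta$, is what makes $\phi^\eps\to\phi$ locally uniformly.
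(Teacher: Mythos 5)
Your proposal uses the same Evans-type perturbed test function as the paper, with the identical ``time-dependent corrector'' $v_\theta(t,x):=u_\theta(t,x)-\langle\theta,x\rangle+t\overline H(\theta)$ and the identical rescaled perturbation, so the conceptual core matches. The technical execution, however, diverges, and the divergence introduces a gap. The paper never regularizes $v_\theta$: it observes that $\phi^\eps(t,x)=\phi(t,x)+\eps v_\theta(t/\eps,x/\eps)$ differs from the genuine viscosity solution $u^\eps_\theta$ by a \emph{smooth} function, so every smooth subtangent to $\phi^\eps$ yields (after subtracting the smooth part) a smooth subtangent to $u^\eps_\theta$, and the viscosity supersolution test for $u^\eps_\theta$ transfers directly into the claim that $\phi^\eps$ is a viscosity supersolution of \eqref{intro viscous hj} in $V_r$. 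The contradiction then comes not from a pointwise computation at a distinguished maximum, but from a comparison principle for \eqref{intro viscous hj} on $V_r$ between the subsolution $u^\eps$ and the supersolution $\phi^\eps$ (via \cite[Theorem 3.3, Sec.\,5.C]{users} under (H3), or via the gradient bound and \cite[Prop.~1.4]{D16} under (L)), followed by the locally uniform convergence $\phi^\eps\ucv\phi$ guaranteed by \eqref{app hom linear data}.

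The gap in your route is precisely where you try to ``subtract the inequality for $v_\theta^\delta$ at $(\tau_\eps,y_\eps)$ from the viscosity subsolution inequality for $u^\eps$ at $(t_\eps,x_\eps)$.'' The viscosity subsolution test for $u^\eps$ requires a $\D{C}^2$ supertangent, whereas your $\phi^\eps$ is only semiconcave. You invoke Jensen's lemma to put the maximum of $u^\eps-\phi^\eps$ at an Alexandrov point of $\phi^\eps$, but Jensen's lemma needs a \emph{semiconvex} function with a strict local maximum, and $u^\eps-\phi^\eps$ is ``continuous plus semiconvex,'' not semiconvex, since $u^\eps$ is merely continuous; the lemma does not apply to it. Moreover, even at an Alexandrov point, the second-order Taylor polynomial of $\phi^\eps$ is a supertangent only up to $o(|\cdot|^2)$, which spoils the clean local maximum of $u^\eps$ minus a smooth function that the subsolution test needs. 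The correct way to repair this is to first prove that the semiconcave, a.e.\ pointwise supersolution $\phi^\eps$ is in fact a \emph{viscosity} supersolution (here Jensen's lemma is applied legitimately, to $\eta-\phi^\eps$ for a smooth subtangent $\eta$), and then to conclude via a comparison principle — which is exactly the paper's argument, reached after an unnecessary detour through inf-convolution and a.e.\ analysis. In short: the idea is right, but the comparison-principle step is not optional; by going through $u^\eps_\theta$ itself you can reach it with no regularization at all.
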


Theorem \ref{app teo hom}  follows {readily} from Proposition \ref{app prop hom}, as we show
now.

\begin{proof}[Proof of Theorem \ref{app teo hom}]
Let us first assume $g\in {\D{C}^2(\R^d)\cap\Lip^2}(\R^d)$. Take a constant $M$ large enough so that 
\[
M> \|\D{tr}(A(x)D_x^2 g(x))   \|_\infty + \|H(x,D g(x))\|_\infty.
\]
Then the functions $u_-(t,x):=g(x)-Mt$ and $u_+(t,x):=g(x)+Mt$ are,
respectively, a Lipschitz continuous sub and supersolution of
\eqref{intro viscous hj} for every $0<\eps\leqslant 1$. By Proposition \ref{prop general Lip comparison}, we get $u_-\leqslant u^\eps \leqslant u_+$ in $\ccyl$ for
every $0<\eps\leqslant 1$. By the definition of relaxed semilimits we
infer
\[u_-(t,x)\leqslant u_*(t,x)\leqslant u^*(t,x)\leqslant u_+(t,x) \quad
\text{for all $(t,x)\in\ccyl$},\]
in particular, $u_*$, $u^*$ satisfy
$u_*(0,\cdot)=u^*(0,\cdot)=g$ on $\R^d$ and  belong to $\K_*$.  By Proposition \ref{app prop
  hom}, we know that $u^*$ and $u_*$ are, respectively, an upper
semicontinuous subsolution and a lower semicontinuous supersolution of
the effective equation \eqref{app effective eq}. We can therefore
apply the Comparison Principle for \eqref{app effective eq} stated in Proposition \ref{prop first order comparison}
 to obtain $u^*\leqslant u_*$ on
$\ccyl$. Since the opposite inequality holds by the definition of upper and
lower relaxed semilimits, we conclude that the function
\[
\overline u(t,x):=u_*(t,x)=u^*(t,x)\qquad\hbox{for all $(t,x)\in\ccyl$}
\]
is the unique continuous viscosity solution of \eqref{app effective
  eq} in $\K_*$ such that $\overline u(0,\cdot)=g$ on
$\R^d$. Furthermore, by Theorem \ref{teo well posed first order}, $\overline u$ is Lipschitz continuous on $\ccyl$. The fact that the relaxed
semilimits coincide implies that  $u^\eps$ converge locally
uniformly in $\ccyl$ to $\overline u$, see for instance \cite[Lemma
6.2, p. 80]{ABIL}.

When the initial datum $g$ is just uniformly continuous on $\ccyl$, the
result easily follows from the above and Lemma \ref{lemma density} by
approximating $g$ with a sequence of initial data belonging to
${\D{C}^2(\R^d)\cap\Lip^2}(\R^d)$ and  by
making use of the contraction property (b) of Definition \ref{def well posed} for \eqref{intro viscous hj}.
\end{proof}

\begin{proof}[Proof of Proposition  \ref{app prop hom}]
  The fact that $u^*$ and $u_*$ are upper and lower semicontinuous on
  $\ccyl$ is an immediate consequence of their definition. Let us
  prove (i), i.e.\ that $u^*$ is a subsolution of \eqref{app effective
    eq}. The proof of (ii) is analogous.

  We make use of Evans's perturbed test function method, see
  \cite{Evans89}. Let us assume, by contradiction, that $u^*$ is not a
  subsolution of \eqref{app effective eq}. Then there exists a
  function $\phi\in\D{C}^2(\cyl)$ that is a strict supertangent of
  $u^*$ at some point $(t_0,x_0)\in\cyl$ and for which the subsolution
  test fails, i.e.
\begin{equation}\label{app test fails}
\partial_t\phi(t_0,x_0)+\overline H(D_x\phi(t_0,x_0))>3\delta
\end{equation}
for some $\delta>0$. For $r>0$ define
$V_r:=(t_0-r,t_0+r)\times B_r(x_0)$. Choose $r_0>0$ to be small enough so
that $V_{r_0}$ is compactly contained in $\cyl$ and $u^*-\phi$ attains a strict
local maximum at $(t_0,x_0)$ in $V_{r_0}$. In particular, we have for
every $r\in(0,r_0)$
\begin{equation}\label{app strict supertangent}
\max_{\partial V_r} (u^*-\phi)<\max_{\overline V_r} (u^*-\phi)=(u^*-\phi)(t_0,x_0).
\end{equation}
Let us set $\theta:=D_x\phi(t_0,x_0)$ and for every $\eps>0$ denote by
$u^\eps_\theta$ the unique continuous function in $\K_*$ that solves \eqref{intro viscous
  hj} subject to the initial condition
$u^\eps_\theta(0,x)=\langle\theta,x\rangle$. We claim that {there is an} $r\in(0,r_0)$ such that the function
\[
\phi^\eps(t,x):=\phi(t,x)+u_\theta^\eps(t,x)-\left(\langle\theta,x\rangle-t\overline H(\theta)\right)
\]
is a supersolution of \eqref{intro viscous hj} in $V_r$ for every $\eps>0$ small enough. Indeed, by a direct computation we first get 
\begin{eqnarray}\label{app ineq1}
&&\partial_t\phi^\eps-\eps\,\Tr\left(A\left(\frac{x}{\eps}\right) D^2_x\phi^\eps\right)+H\left(\frac{x}{\eps},D_x\phi^\eps\right)
=
\partial_t\phi+\overline H(\theta)-\eps\,\Tr\left(A\left(\frac{x}{\eps}\right) D^2_x\phi\right)\nonumber\medskip\\
&&+\partial_t u^\eps_\theta-\eps\,\Tr\left(A\left(\frac{x}{\eps}\right)D_{x}^2u^\eps_\theta\right)+H\left(\frac{x}{\eps},D_xu^\eps_\theta+D_x\phi-\theta\right)
\end{eqnarray}
in the viscosity sense in $V_r$. Using \eqref{app test fails},
assumption (A1), and the fact that $\phi$ is of class $C^2$, we get
that there is an $r\in (0,r_0)$ such that for all sufficiently small
$\eps>0$ and all $(t,x)\in V_r$
\[
\partial_t\phi(t,x)+\overline
H(\theta)-\eps\,\Tr\left(A\left(\frac{x}{\eps}\right)
  D^2_x\phi\right)>3\delta-\eps\,\Tr\left(A\left(\frac{x}{\eps}\right)
  D^2_x\phi\right)>2\delta.
\]
Moreover, by taking into account either (H3) or (L) (together with Remark \ref{oss rescaling}) and (H1), we can further reduce $r$ if necessary to get 
\[
H\left(\frac{x}{\eps},Du^\eps_\theta+D_x\phi-\theta\right)
>
H\left(\frac{x}{\eps},Du^\eps_\theta\right)-\delta\qquad\hbox{in the viscosity sense in $V_r$.}
\]
Plugging these relations into \eqref{app ineq1} and using the fact that $u^\eps_\theta$ is a solution of \eqref{intro viscous hj}, we finally get 
\begin{eqnarray*}
&&\partial_t\phi^\eps-\eps \Tr\left(A\left(\frac{x}{\eps}\right)D_x^2\phi^\eps\right)+H\left(\frac{x}{\eps},D_x\phi^\eps\right)\\
&&\qquad\qquad\qquad\qquad>
\delta+\partial_t u^\eps_\theta-\Tr\left(A\left(\frac{x}{\eps}\right)D_x^2u^\eps_\theta\right)+H\left(\frac{x}{\eps},Du^\eps_\theta\right)
=\delta>0
\end{eqnarray*}
in the viscosity sense in $V_r$, thus showing that $\phi^\eps$ is a supersolution of \eqref{intro viscous hj} in $V_r$. We now need a comparison principle for equation \eqref{intro viscous hj} in $V_r$ applied to $\phi^\eps$ and $u^\eps$ to infer that 
\[
\sup_{ V_r} (u^\eps-\phi^\eps){\leqslant}\max_{\partial V_r} (u^\eps-\phi^\eps).
\]
If condition (H3) holds, we can apply \cite[Theorem 3.3 and Section
5.C]{users}. If (L) is satisfied, then $D_x \phi^\eps\in L^\infty (V_r)$ and a standard 
argument (see, for instance, Proposition 1.4 in \cite{D16}) shows that the comparison principle for \eqref{intro viscous
  hj} in $V_r$ holds in this case as well.  Now notice that, by the
assumption \eqref{app hom linear data}, $\phi^\eps\ucv\phi$ in
$\overline V_r$. Taking the limsup of the last inequality as
$\eps\to 0^+$ we obtain
\[
\sup_{V_r} (u^*-\phi)
\leqslant 
\varlimsup_{\eps\to 0^+} \sup_{ V_r} (u^\eps-\phi^\eps)
\leqslant
\varlimsup_{\eps\to 0^+}
\max_{\partial V_r} (u^\eps-\phi^\eps)
\leqslant
\max_{\partial V_r} (u^*-\phi),
\]
a contradiction with \eqref{app strict supertangent}. This proves that
$u^*$ is a subsolution of \eqref{app effective eq}. 
\end{proof}

Notice that in Theorem \ref{app teo hom} and Proposition \ref{app prop hom} we have assumed {\em as a hypothesis} that $\overline H:\R^d\to\R$ is continuous and coercive. While the latter property is inherited from that of $H$, 
as we show below, the 
continuity is not guaranteed a priori, but is deduced from the way the effective Hamiltonian is obtained. 
Our next result shows that this property can be, for instance, deduced when the bounds on the derivatives of the functions $u^\eps_\theta$ with respect to $x$ are locally uniform with respect to $\theta$. 

\begin{prop}\label{app prop useful}
  Let $A$ and $H$ satisfy hypotheses (A1)-(A2) and (H1)-(H2),
  respectively, and the Cauchy problem for \eqref{app hj} be
  $\K_*$-well-posed. Assume that, for every $\theta\in\R^d$, the
  convergence stated in \eqref{app hom linear data} holds for some
  function $\overline H:\R^d\to\R$. Then $\overline H$ satisfies
  assumption (H2).  Let us furthermore assume either condition (H3)
  or the following:
\begin{itemize}
 \item[(L$'$)] for every $r>0$, there exists a constant $\kappa_r$ such that 
\[
|u_\theta(t,x)-u_\theta(t,y)|\leqslant \kappa_r |x-y|\qquad\hbox{for all $x,y\in\R^{d}$, $t\geqslant 0$ and $\theta\in B_r$.}
\]

\end{itemize}
Then, for every $r>0$, there exists a continuity modulus  $m_r$ such that 
\begin{equation}\label{claim continuity effective H}
\left| u^\eps_{\theta_1}(t,x)-u^\eps_{\theta_2}(t,x) \right|\leqslant t\,m_r\left(|\theta_1-\theta_2|\right)+|x||\theta_1-\theta_{2}|
\end{equation}
{for every $x\in\R^d$ and $\theta_1,\theta_2\in B_r$.}
In particular, the effective Hamiltonian $\overline H$ is continuous.
\end{prop}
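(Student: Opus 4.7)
The plan is to break the proof into three steps: verify that $\overline H$ satisfies (H2), establish the quantitative bound \eqref{claim continuity effective H}, and then deduce the continuity of $\overline H$ by evaluating the bound at $(t,x)=(1,0)$ and passing to the limit $\eps\to 0^+$.

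For (H2), I would use the observation that, for every $\eps>0$, the functions $\langle\theta,x\rangle-t\beta(|\theta|)$ and $\langle\theta,x\rangle-t\alpha(|\theta|)$ are, respectively, a Lipschitz sub- and supersolution of \eqref{intro viscous hj} sharing the initial datum $\langle\theta,x\rangle$ with $u^\eps_\theta$. Proposition \ref{prop general Lip comparison} then yields the two-sided bound $\langle\theta,x\rangle-t\beta(|\theta|)\leqslant u^\eps_\theta(t,x)\leqslant \langle\theta,x\rangle-t\alpha(|\theta|)$, and evaluating at $(1,0)$ together with \eqref{app hom linear data} gives $\alpha(|\theta|)\leqslant \overline H(\theta)\leqslant \beta(|\theta|)$, so $\overline H$ is coercive and locally bounded.

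For the key estimate, fix $\theta_1,\theta_2\in B_r$ and introduce $W(t,x):=u^\eps_{\theta_1}(t,x)-\langle\theta_1-\theta_2,x\rangle$. Since the substitution is affine in $x$, $W$ is a viscosity solution of the modified equation $\partial_t W-\eps\,\Tr(A(x/\eps)D_x^2W)+\tilde H(x/\eps,D_xW)=0$ with $\tilde H(y,p):=H(y,p+(\theta_1-\theta_2))$, and it shares the initial datum $\langle\theta_2,\cdot\rangle$ with $u^\eps_{\theta_2}$. If I find a constant $\psi=\psi(\theta_1-\theta_2)\geqslant 0$ that dominates $|H-\tilde H|$ on the range of gradients relevant to $W$, a direct computation shows that $W\pm t\psi$ are, respectively, a super- and a subsolution of \eqref{intro viscous hj}; both remain in $\K_*$ and share their initial data with $u^\eps_{\theta_2}$, so an appropriate comparison principle gives
$$u^\eps_{\theta_1}(t,x)-\langle\theta_1-\theta_2,x\rangle-t\psi\leqslant u^\eps_{\theta_2}(t,x)\leqslant u^\eps_{\theta_1}(t,x)-\langle\theta_1-\theta_2,x\rangle+t\psi,$$
which, after symmetrization in $\theta_1,\theta_2$ and the bound $|\langle\theta_1-\theta_2,x\rangle|\leqslant|x|\,|\theta_1-\theta_2|$, is precisely \eqref{claim continuity effective H}.

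The choice of $\psi$ and of the comparison result depends on which additional assumption is in force. Under (H3), one may take $\psi:=m(|\theta_1-\theta_2|)$ (a uniform bound independent of $p$) and apply a standard comparison principle for viscous HJ equations valid under (H1)--(H3). Under (L$'$), Remark \ref{oss rescaling} implies that $u^\eps_{\theta_1}(t,\cdot)$ is $\kappa_r$-Lipschitz, hence $W(t,\cdot)$ is Lipschitz in $x$ with constant at most $\kappa_r+2r$; by (H1), $H$ admits a modulus of continuity $\omega$ on $\R^d\times\overline B_{R'}$ with $R':=\kappa_r+2r$, and since any sub/supertangent to $W$ has gradient lying in $B_{\kappa_r+2r}$, the choice $\psi:=\omega(|\theta_1-\theta_2|)$ works; comparison then follows directly from Proposition \ref{prop general Lip comparison}, since $W\pm t\psi$ is Lipschitz in $x$. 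The main delicate point is checking the applicability of the cited comparison principles to the pair $(W\pm t\psi,u^\eps_{\theta_2})$; once \eqref{claim continuity effective H} is established, continuity of $\overline H$ on each $B_r$ is immediate by taking $x=0$, $t=1$ and letting $\eps\to 0^+$.
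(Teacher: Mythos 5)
Your proposal is correct and follows essentially the same route as the paper: the paper sets $v^\eps_\theta:=u^\eps_\theta-\langle\theta,\cdot\rangle$ and compares $v^\eps_{\theta_1}\mp t\,m_r(|\theta_1-\theta_2|)$ with $v^\eps_{\theta_2}$ via Proposition \ref{prop general Lip comparison}, which is the same comparison you perform after the equivalent affine shift $W=u^\eps_{\theta_1}-\langle\theta_1-\theta_2,\cdot\rangle$. The only caveat is a small bookkeeping point under (L$'$): to justify $R'=\kappa_r+2r$ one should note that for a test function $\phi$ tangent to $W$, the shifted gradient $D_x\phi+(\theta_1-\theta_2)$ is the gradient of a test function tangent to $u^\eps_{\theta_1}$ and hence lies in $B_{\kappa_r}$; otherwise one should simply enlarge to $R'=\kappa_r+4r$.
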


\begin{proof}
  According to \eqref{app hom linear data}, the effective Hamiltonian
  is defined by the following formula:
\begin{equation}\label{app def effective H}
\overline H(\theta)=\lim_{\eps\to 0^+} -u^\eps_\theta(1,0)\qquad\hbox{for every $\theta\in\R^d$.}
\end{equation}
Let us first show that $\overline H$ satisfies (H2). Let us fix
$\theta\in\R^d$. It is easily seen that the functions
$u_-(t,x)=\langle \theta, x\rangle-t\beta(|\theta|)$ and
$u_+(t,x)=\langle \theta, x\rangle-t\alpha(|\theta|)$ are
classical sub and supersolutions to \eqref{intro viscous hj} for every
$\eps>0$, respectively. By Proposition \ref{prop general Lip comparison} we get
$u_-\leqslant u^\eps_\theta\leqslant u_+$ in $\ccyl$, and the assertion immediately follows from this in
view of \eqref{app def effective H}. 

To prove \eqref{claim continuity effective H}, we let
$v_\theta^\eps(t,x):=u^\eps_{\theta}(t,x)-\langle\theta,x\rangle$ for
every $\theta\in\R^d$. Then $v^\eps_\theta$ is a solution of
\begin{eqnarray}\label{app continuity H effective}
\displaystyle\partial_t v^\eps_\theta
-
\eps\,\Tr\left(A\left(\frac{x}{\eps}\right)D_x^2v^\eps_\theta\right)
+
H\left(\frac{x}{\eps},\theta+D_x v^\eps_\theta\right)=0 \qquad \hbox{in $\cyl$}
\end{eqnarray}
satisfying $v^\eps_\theta(0,x)=0$ in $\R^d$. If hypothesis (L$'$) is
in force, then, in view of Remark \ref{oss rescaling}, for every $r>0$
there exists $\rho(r)>0$ such that
$\|D_x v^\eps_\theta\|_\infty<\rho(r)$ for every $0<\eps\leqslant1$ and
$\theta\in B_r$.  Let us fix $r>0$ and denote by $m_r$ a modulus of
continuity such that
\[
|H(x,p_1)-H(x,p_2)|\leqslant m_r(|p_1-p_2|)\qquad\hbox{for all $x\in\R^d$ and $p_1,p_2\in B_{r+\rho(r)}$.}
\]
If, on the other hand, hypothesis (H3) is in force, then the above
inequality holds with $m_r$ independent of $r$.  Take
$\theta_1,\theta_2\in B_r$. Then for every $\eps>0$
\[
\left| H\left(\frac{x}{\eps},\theta_1+D_x v^\eps_{\theta_1}\right)-H\left(\frac{x}{\eps},\theta_{2}+D_x v^\eps_{\theta_1}\right) \right|\leqslant m_r\left(|\theta_1-\theta_2|\right)
\]
in the viscosity sense in $\cyl$. We infer that the functions $v^\eps_{\theta_1}(t,x)-t\,m_r\left(|\theta_1-\theta_2|\right)$ and $v^\eps_{\theta_1}(t,x)+t\,m_r\left(|\theta_1-\theta_2|\right)$ are, respectively, a sub and a supersolution of \eqref{app continuity H effective} with $\theta:=\theta_2$. 
By Proposition \ref{prop general Lip comparison} we conclude that
\[
\left| v^\eps_{\theta_1}(t,x)-v^\eps_{\theta_2}(t,x) \right|\leqslant t\, m_r\left(|\theta_1-\theta_2|\right).
\]
By the definition of $v^\eps_{\theta}$, we get \eqref{claim continuity effective H}, and, in view of  \eqref{app def effective H}, we  {obtain}, in particular,
\[
\left|\overline H(\theta_1)-\overline H(\theta_2)\right|\leqslant m_r\left(|\theta_1-\theta_2|\right)\qquad\hbox{for all $\theta_1,\theta_2\in B_r$},
\]
yielding the asserted continuity of $\overline H$. 
\end{proof}
\medskip

\section{Homogenization in the stationary ergodic setting}\label{sez stationary ergodic homogenization}

\subsection{Stationary ergodic framework.} \label{sez stationary ergodic framework}
In this section we
recall basic definitions and discuss some of the
implications of stationarity and ergodicity for the results of
Section~\ref{sez general}.

We denote by $(\Omega,\F, \PP)$ a {\em probability space}, where
$\Omega$ is a sample space, ${\cal F}$ is a sigma-algebra of subsets
of $\Omega$ and $\PP$ is a probability measure on
$(\Omega,\F)$. 
We shall denote by $\Bor(\R^d)$ the $\sigma$-algebra of Borel
subsets of $\R^d$. When defining measurability we shall always work with
measurable spaces $(\R^d,\Bor(\R^d))$, $(\Omega,{\cal F})$, and the
product space $\R^d\times\Omega$ endowed with the corresponding product
$\sigma$-algebra $\Bor
(\R^d)\otimes\F$. 
Polish spaces $\D{C}(\R^d)$ and $\D{C}(\R^d\times\R^d)$ are considered with
the topology of locally uniform convergence on $\R^d$ and
$\R^d\times\R^d$ respectively and are equipped with their Borel
$\sigma$-algebras.

A {\em d-dimensional dynamical system of shifts} $(\tau_x)_{x\in\R^d}$ is
defined as a family of mappings $\tau_x:\Omega\to\Omega$ which
satisfy the following properties:
\begin{enumerate}
\item[{\em (1)}] ({\em group property}) 
  $\tau_0=id$,\quad $\tau_{x+y}=\tau_x\comp\tau_y$;

\item[{\em (2)}] ({\em preservation of measure}) $\tau_x:\Omega\to\Omega$ is
  measurable and 
  $\PP(\tau_x E)=\PP(E)$ for every $E\in\F$;

\item[{\em (3)}] ({\em joint measurability}) the map
  $(x,\omega)\mapsto \tau_x\omega$ from ${ \R^d}\times\Omega$ to $\Omega$
  is measurable
.
\end{enumerate}
\begin{sloppypar}
The above properties guarantee (see \cite[(7.2)]{JKO}) that $(\tau_x)_{x\in\R^d}$ is continuous, i.e.\ 
${\lim_{|x|\to
  0}\|f(\tau_x\omega)-f(\omega)\|_{L^2(\Omega)}=0}$ for every
$f\in L^2(\Omega)$.
\end{sloppypar}

We make the crucial assumption that $(\tau_x)_{x\in{\R^d}}$ is {\em
  ergodic,} i.e.\ 
any measurable function $f:\Omega\to\R$ enjoying
$f(\tau_x\omega)=f(\omega)$ a.s.\ in $\Omega$, for any fixed $x\in{\R^d}$,
is almost surely constant.

We shall say that a (measurable) random field 
$H:\Omega\to \D{C}(\R^d\times\R^d)$ is {\em stationary with respect to
  the shifts $(\tau_x)_{x\in\R^d}$} if it admits the following
representation:
\begin{equation}
  \label{st}
H(x,p,\omega)=\tilde{H}(p,\tau_x\omega)\quad\text{for all $x,p\in\R^d$ and
$\omega\in\Omega$.}  
\end{equation}
for some (measurable) $\tilde{H}:\Omega\to \D{C}(\R^d)$. Note that
\eqref{st} and the group property {\em(1)} above immediately imply
that\smallskip
  \begin{itemize}
  \item[(S)] $H(x+y,p,\omega)=H(x,p,\tau_y\omega)$\quad for all $x,y,p\in\R^d$ and $\omega\in\Omega$.\smallskip
  \end{itemize}
  Since random variables $H(x,p,\cdot)$ and $H(x+y,p,\cdot)$ have the
  same distribution due to {\em(2)}, we can say informally that (S)
  expresses the desired feature of the underlying random medium: at
  different points in space the medium statistically ``looks'' the
  same. We also remark that every $H$ satisfying (S) admits a representation of
  the form \eqref{st} (see, for instance, \cite[Proposition 3.1]{DS09}). 
Stationarity of a random process $b:\Omega\to \D{C}(\R^d)$ is
  defined in the same way simply by omitting $p$ in \eqref{st}. 
  
  We are interested in homogenization for solutions
  $u^\epsilon(t,x,\omega)$ of the Cauchy problem for equation
  (HJ$_\epsilon$) with Hamiltonian $H(x,p,\omega)$ on a set of
  $\omega$ of full measure. We shall assume that all constants in the
  assumptions on $A$ and $H$, i.e.\
  $\Lambda_A,\ \alpha(\cdot),\ \beta(\cdot)$, and the moduli of
  continuity of $H$ on $\R^d\times B_R$ for each $R>0$, are
  independent of $\omega$.

  We note that one of the consequences of is
  that the locally uniform convergence \eqref{app hom linear data} of
  Theorem~\ref{app teo hom} follows from the a.s.\ convergence at
  $t=1$ (equivalently, at an arbitrary $t>0$) and $x=0$ as long as the
  family
  $\{u_\theta^\epsilon(1,\cdot,\omega),\,0<\epsilon\leqslant
  1,\omega\in\Omega\}$ is equi-continuous.
  
\begin{lemma}\label{hom st erg}
  Let $H$ and (all entries of) $A$ be stationary in the sense of the above definition and satisfy hypotheses (H1)-(H2) and (A1)-(A2),
  respectively. We shall suppose that all uniform bounds are
  independent of $\omega$. Assume that, for every fixed $\omega$, the Cauchy problem for
  \eqref{app hj} is $\K_*$-well-posed, and that, for each
  $\theta\in\R^d$, there is a modulus of continuity $m_\theta$ such
  that for all $\epsilon\in(0,1]$ and $\omega\in\Omega$
  \begin{equation}
    \label{uc}
    |u^\epsilon_\theta(1,x,\omega)-u^\epsilon_\theta(1,0,\omega)|\leqslant m_\theta(|x|)\quad\text{for all $x\in\R^d$.}
  \end{equation}
If, for each $\theta\in\R^d$, we have  
  \begin{equation}
    \label{3.1weak}
    \lim_{\epsilon\to 0+}u_\theta^\epsilon(1,0,\omega)= -\overline{H}(\theta)
  \end{equation}
  with probability 1, then the above convergence is locally uniform, i.e.\ with probability $1$
  \begin{equation}\label{claim distilled}
  u_\theta^\epsilon(t,x,\omega)\ucv
  \dprod{\theta}{x}-t\overline{H}(\theta)\quad\text{in }\ccyl.
  \end{equation}
  In particular, if the condition (L$'$) from Proposition~\ref{app
    prop useful} is satisfied with $\kappa_r$ independent of $\omega$,
  then there is a set $\hat{\Omega}\subseteq \Omega$ of full measure
  such that the last convergence takes place for all $\theta\in\R^d$
  and all $\omega\in\hat{\Omega}$, and, thus, the conclusion of
  Theorem~\ref{app teo hom} holds for all $\omega\in\hat{\Omega}$.
\end{lemma}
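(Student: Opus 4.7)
Fix $\theta\in\R^d$ and set $U^*(t,x):=\dprod{\theta}{x}-t\overline{H}(\theta)$. The strategy is to leverage stationarity and ergodicity to show that the upper and lower relaxed semilimits of $u^\eps_\theta(\cdot,\cdot,\omega)$ as $\eps\to 0^+$ are deterministic and coincide with $U^*$. The scaling identity $u^\eps_\theta(t,x,\omega)=\eps\,u_\theta(t/\eps,x/\eps,\omega)$ combined with the stationarity of $A$ and $H$ yields the translation identity
\begin{equation*}
u^\eps_\theta(t,x+y,\omega)=\dprod{\theta}{y}+u^\eps_\theta(t,x,\tau_{y/\eps}\omega),\qquad x,y\in\R^d,\ t\geqslant 0,\ \eps>0.
\end{equation*}
Combining this with (uc) gives the uniform estimate $|u^\eps_\theta(1,x,\omega)-u^\eps_\theta(1,y,\omega)|\leqslant m_\theta(|x-y|)$ for every $\omega\in\Omega$ and $\eps\in(0,1]$; rescaling extends this bound to every $t>0$. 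The explicit two-sided sandwich $|u^\eps_\theta(t,x,\omega)-\dprod{\theta}{x}|\leqslant M_\theta\, t$, with $M_\theta:=\max\{-\alpha(|\theta|),\beta(|\theta|)\}$ independent of $\omega$ and $\eps$, together with the contraction property (b) of Definition~\ref{def well posed}, yields equi-Lipschitz continuity in $t$ with the same constant. Hence on every compact subset of $\ccyl$ the family $\{u^\eps_\theta(\cdot,\cdot,\omega):\eps\in(0,1],\omega\in\Omega\}$ is uniformly bounded and equi-continuous.

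\textbf{Shift-invariance and ergodicity.} Introduce the relaxed semilimits $u^*(t,x,\omega)$ and $u_*(t,x,\omega)$ of $u^\eps_\theta(s,y,\omega)$ as $\eps\to 0^+$ and $(s,y)\to(t,x)$. The equi-continuity makes them continuous in $(t,x)$ for every $\omega$. Rewriting the translation identity as $u^\eps_\theta(t,x,\tau_z\omega)=u^\eps_\theta(t,x+\eps z,\omega)-\eps\dprod{\theta}{z}$ and letting $\eps\to 0^+$ with $z\in\R^d$ fixed yields $u^*(t,x,\tau_z\omega)=u^*(t,x,\omega)$, and similarly for $u_*$. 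Ergodicity then forces $u^*(t,x,\cdot)$ and $u_*(t,x,\cdot)$ to be a.s.\ equal, for every fixed $(t,x)$, to deterministic continuous functions $h^*(t,x)\geqslant h_*(t,x)$. By the scaling $u^\eps_\theta(t,0,\omega)=t\cdot u^{\eps/t}_\theta(1,0,\omega)$ and the hypothesis of a.s.\ convergence at $(1,0)$, collected by countable intersection over rational $t>0$, one obtains $h^*(t,0)=h_*(t,0)=-t\overline{H}(\theta)$ for every $t\geqslant 0$.

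\textbf{Identification and conclusion.} The translation identity and the measure-preservation of $\tau_{x/\eps}$ give, via dominated convergence,
\begin{equation*}
\EE\bigl|u^\eps_\theta(t,x,\cdot)-U^*(t,x)\bigr|=\EE\bigl|u^\eps_\theta(t,0,\cdot)+t\overline{H}(\theta)\bigr|\underset{\eps\to 0^+}{\longrightarrow}0,
\end{equation*}
so $u^\eps_\theta(t,x,\cdot)\to U^*(t,x)$ in probability at every $(t,x)\in\ccyl$. A standard diagonal extraction over a countable dense $D\subset\ccyl$ combined with the equi-continuity produces a subsequence $\eps_n\to 0^+$ and a full-measure set $\Omega^{**}$ on which $u^{\eps_n}_\theta(\cdot,\cdot,\omega)\ucv U^*$ in $\ccyl$; this already gives the inclusions $h_*\leqslant U^*\leqslant h^*$. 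The main obstacle is upgrading this subsequential a.s.\ convergence to a.s.\ convergence of the entire family. To do so, I would observe that the event $\Omega_\theta:=\{\omega: u^\eps_\theta(\cdot,\cdot,\omega)\ucv U^*\text{ on }\ccyl\}$ is shift-invariant by the identity $u^\eps_\theta(t,x,\tau_z\omega)=u^\eps_\theta(t,x+\eps z,\omega)-\eps\dprod{\theta}{z}$, hence has probability $0$ or $1$ by ergodicity. Then, for every $\omega\in\Omega^{**}$, the precompactness of $\{u^\eps_\theta(\cdot,\cdot,\omega)\}_\eps$ in the local uniform topology and the fact that any subsequential limit must agree with $U^*$ on $D$---a rigid consequence of the deterministic identification $h^*=h_*=U^*$ and the shift-invariance---force the whole family to converge to $U^*$; thus $\PP(\Omega_\theta)\geqslant \PP(\Omega^{**})=1$. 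Once $h^*=h_*=U^*$ is in place, the classical half-relaxed limits lemma (see \cite[Lemma~6.2]{ABIL}) delivers the locally uniform convergence with probability 1.

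\textbf{The ``In particular'' statement.} Under (L$'$) with $\kappa_r$ independent of $\omega$, the family $\{u^\eps_\theta(t,\cdot,\omega)\}$ is equi-Lipschitz in $x$ for $\theta$ in any bounded set, uniformly in $\omega,\eps,t$. Choose a countable dense $Q\subset\R^d$, apply the already-established main statement to each $\theta\in Q$, and let $\hat\Omega:=\bigcap_{\theta\in Q}\Omega_\theta$, which is of full measure. The uniform equi-Lipschitz bound in $x$ and the continuity of $\overline{H}$ from Proposition~\ref{app prop useful} allow one to propagate the convergence from $Q$ to every $\theta\in\R^d$ on the single set $\hat\Omega$.
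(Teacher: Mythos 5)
Your strategy is genuinely different from the paper's: instead of invoking the abstract ergodic lemma from \cite[Lemma 2.4]{AT15} applied to $X_\eps(x,\omega):=\eps|w(1/\eps,x,\omega)|$ (where $w=u_\theta-\dprod{\theta}{x}+t\overline H(\theta)$), you try to reach the conclusion via relaxed semilimits, convergence in probability, and a zero-one law. Unfortunately, the argument has a real gap.

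You correctly show that $u^*(t,x,\cdot)$ and $u_*(t,x,\cdot)$ are shift-invariant, hence a.s.\ equal to deterministic functions $h^*(t,x)\geqslant h_*(t,x)$, and that $h^*(t,0)=h_*(t,0)=-t\overline H(\theta)$ via the scaling relation. The $L^1$ argument and diagonal extraction then yield a subsequence $\eps_n$ and a full-measure set $\Omega^{**}$ with $u^{\eps_n}_\theta\ucv U^*$, which gives only the inequalities $h_*\leqslant U^*\leqslant h^*$. At this point you write that ``any subsequential limit must agree with $U^*$ on $D$---a rigid consequence of the deterministic identification $h^*=h_*=U^*$''. But $h^*=h_*$ at points $(t,x)$ with $x\neq 0$ has not been established: subsequential relaxed semilimits can sit strictly between $u_*$ and $u^*$, so a.s.\ convergence of one subsequence to $U^*$ does not force $h^*=h_*$. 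Your zero-one law for $\Omega_\theta$ is correct but vacuous without an accompanying positive-probability estimate, and that estimate is exactly what you are trying to produce. The relaxed semilimit at $(t,x)$ with $x\neq 0$ involves the values $u^\eps_\theta(s,0,\tau_{y/\eps}\omega)$ for $y$ near $x$, i.e.\ shifts $y/\eps\to\infty$, and controlling the $\limsup$ over this diverging family of shifts from pointwise a.s.\ convergence at the origin is precisely the content of the quoted lemma from \cite{AT15} (or the earlier arguments in \cite{KRV,DS11,AS12} it distills). Some version of that sublinearity/ergodic-theorem step must be supplied; without it, $h^*=h_*$ remains unproved.

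A secondary point: in the ``in particular'' step, propagation from a countable dense $Q\subset\R^d$ to all $\theta$ requires the modulus-of-continuity estimate in $\theta$ from \eqref{claim continuity effective H}, i.e.\ $|u^\eps_{\theta_1}-u^\eps_{\theta_2}|\leqslant t\,m_r(|\theta_1-\theta_2|)+|x|\,|\theta_1-\theta_2|$, not merely the equi-Lipschitz bound in $x$; you cite the right proposition but name the wrong estimate.
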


The proof below is based on a by-now standard argument
  which appeared in, for instance, \cite[pp.\ 1501-1502]{KRV}, \cite[pp.\
  403-404]{DS11}, \cite[Lemma~4.10]{AS12}. It was later ``distilled'' into
  an abstract lemma in \cite[Lemma 2.4]{AT15}, which is convenient for
  time-independent applications. Since $u_\theta^\epsilon$ is time
  dependent, we shall need an additional easy step.
 
 \begin{proof}
 Fix $\theta\in \R^d$ and set 
  \[
  w(t,x,\omega):=u_\theta(t,x,\omega)-\dprod{\theta}{x}+t\overline{H}(\theta).
  \]
  For any fixed $\omega\in\Omega$, the function $w(\cdot,\cdot,\omega)$ is in $\K_*$ and solves (HJ$_1$) with Hamiltonian
  $H(\cdot,\theta+\cdot,\omega)-\overline{H}(\theta)$ and zero initial datum. By stationarity of the Hamiltonian and uniqueness of the solution, we
  conclude that $w(t,\cdot,\cdot)$ is stationary in $x$, for every fixed $t\geqslant 0$. We claim that there exists a set $\Omega_\theta$ of full measure such that, for every $\omega\in\Omega_\theta$, 
\begin{equation}\label{t1}
\limsup_{\epsilon\to 0}\sup_{y\in B_R}|u^\epsilon_\theta(1,y,\omega)-\dprod{\theta}{y}+\overline{H}(\theta)|=0
\qquad
\hbox{for all $R>0$.}
\end{equation}
To prove this, it suffices to apply \cite[Lemma 2.4]{AT15} with $X_\epsilon(x,\omega):=\eps|w(1/\eps, x,\omega)|$.  Indeed, by the rescaling 
$u^\eps_\theta(t,x,\omega)=\eps u_\theta (t/\eps,x/\eps,\omega)$, claim \eqref{t1} is 
equivalent to 
\[\PP\Big(\omega\in\Omega\,:\,\forall R>0\quad
\limsup\limits_{\epsilon\to 0}\sup\limits_{y\in
  B_{R/\epsilon}}|X_\epsilon(\cdot,\omega)|=0\Big)=1.
  \]
Let us then check that 
such $X_\epsilon$ satisfy the conditions stated in the quoted lemma. The stationarity of $X_\epsilon$ follows
  from the stationarity of $v$. The a.s.\ convergence
  $\lim\limits_{\epsilon\to 0}X_\epsilon(0,\cdot)=0$ is just a
  restatement of \eqref{3.1weak}. The property
 \[
     \PP\Big(\omega\in\Omega\,:\,\forall z\in\R^d\ \ \lim\limits_{r\to
    0}\limsup\limits_{\epsilon\to
    0}\operatornamewithlimits{osc}\limits_{
    B_{r/\epsilon}(z/\epsilon)}X_\epsilon(\cdot,\omega)=0\Big)=1
    \]
  is an immediate consequence of the assumption \eqref{uc}.   
  Indeed,
  \begin{multline*}
    \operatornamewithlimits{osc}\limits_{y\in
    B_{r/\epsilon}(z/\epsilon)}X_\epsilon(y,\omega)\leqslant\epsilon\sup_{x,y\in B_{r/\epsilon}(z/\epsilon)}|w(1/\epsilon,0,\tau_x\omega)-w(1/\epsilon,y-x,\tau_x\omega)|\\ \leqslant2r|\theta|+\sup_{|y-x|\leqslant2r}|u^\epsilon_\theta(1,0,\tau_{x/\epsilon}\omega)-u^\epsilon_\theta(1,y-x,\tau_{x/\epsilon}\omega)|\leqslant2r|\theta|+m_\theta(2r).
  \end{multline*}
\indent Let us proceed to show that, for every fixed $\omega\in\Omega_\theta$, the convergence \eqref{claim distilled} holds. We first take note of the following scaling relations:
\[u^\epsilon_\theta(t,x,\omega)=t(\epsilon/t)u_\theta(t/\epsilon,x/\epsilon,\omega)=tu^{\epsilon/t}(1,x/t,\omega)\quad\text{for all $t>0$ and $x\in\R^d$}.\]
Fix $T>0$. Then for $r\in(0,T)$ we obtain
\begin{align*}
  \sup_{r\leqslant t\leqslant T}\sup_{y\in B_R}&|u^\epsilon_\theta(t,y,\omega)-\dprod{\theta}{y}+t\overline{H}(\theta)|
\\&=\sup_{r\leqslant t\leqslant T}\sup_{y\in B_R}|tu^{\epsilon/t}_\theta(1,y/t,\omega)-t\dprod{\theta}{y/t}+t\overline{H}(\theta)|
\\&\leqslant T\sup_{\epsilon/T\leqslant \eta\leqslant \epsilon/r}\sup_{z\in B_{R/r}}|u^\eta_\theta(1,z,\omega)-\dprod{\theta}{z}+\overline{H}(\theta)|.
\end{align*}
By \eqref{t1}, the right-hand side goes to $0$ as $\epsilon\to 0^+$.
Finally, we use the uniform in $\epsilon$ (and $\omega$)
continuity of $u^\epsilon(t,0,\omega)$ at $t=0$ implied by the condition ${\cal K}_*$
and get that for all $r\in(0,T)$
\begin{align*}
  &\sup_{0\leqslant t\leqslant r}\sup_{y\in B_R}
  |u_\theta^\epsilon(t,y,\omega)-\dprod{\theta}{y}+
  t\overline{H}(\theta)|\\&\leqslant \sup_{0\leqslant t\le
                            r}\sup_{y\in B_R}|\epsilon u_\theta(t/\epsilon,y/\epsilon,\omega)-\epsilon u_\theta(0,y/\epsilon,\omega)|+r|\overline{H}(\theta)|\leqslant \epsilon a+rM_a+r|\overline{H}(\theta)|.
\end{align*}
The last expression goes to zero when we let $\epsilon\to 0^+$ and then
$r\to 0^+$. This proves \eqref{claim distilled} for all
$\omega\in\Omega_\theta$.  The remainder of the statement with
$\hat{\Omega}=\cap_{\theta\in\Q^d}\Omega_\theta$ follows from \eqref{claim distilled}, (L$'$), and the bound (3.6) in Proposition~\ref{app prop
  useful}.
\end{proof}

\subsection{Homogenization for non-convex Hamiltonians.}\label{sez non convex homogenization}
The aim of the present section is to establish a homogenization result
in the stationary ergodic setting for equations of the form 
(HJ$_\epsilon$) in one
space dimension, where the stationary random field
$H:\Omega\to \D{C}(\R\times\R)$ takes values in a special class of
non-convex Hamiltonians, see Theorem \ref{teo hom final} for
details. The proof of this result is derived from a more general
principle that we shall describe and prove first.

Let $H_+,H_-:\Omega\to\D{C}\left(\R\times\R\right)$ be
stationary random fields such that $H_\pm(\cdot,\cdot,\omega)\in\Ham$ for every $\omega$ (see Definition~\ref{Fam}), 
with $\gamma>1,\, \alpha_0,\,\beta_0>0$ independent of $\omega$. In
addition, we assume that 
\begin{equation}
  \label{hpm}
  H_\pm(x,0,\omega)= h_0\ \ \text{and}\ \  H_+(x,p,\omega)p\leqslant H_-(x,p,\omega)p\quad\text{for all $(x,p,\omega)\in\R^2\times\Omega$,}
\end{equation}
for some constant $h_0\in\R$.    
Let
\begin{equation}
  \label{Ham}
 H(x,p,\omega)=\min\{H_+(x,p,\omega),\ H_-(x,p,\omega)\}\quad\hbox{for all $(x,p,\omega)\in\R^2\times\Omega$}.
\end{equation}
Then $H(\cdot,0,\cdot)\equiv h_0$ on $\R\times\Omega$ 
and, in view of \eqref{hpm}, we also have
\[
 H(x,p,\omega)=H_+(x,p,\omega)\quad\hbox{if $p\geqslant 0$}
 \qquad\hbox{and}\qquad
 H(x,p,\omega)=H_-(x,p,\omega)\quad\hbox{if $p\leqslant 0$.}
\]
Note that $H$ is stationary and, for every fixed $\omega$, belongs to the same class
$\Ham$ as $H_\pm$.  We let $A(x,\omega)$ be a stationary process which satisfies
(A1)-(A2) with $\Lambda_A$ independent of $\omega$. 
We consider the family $u^\epsilon(\cdot,\cdot,\omega)$,
$\epsilon\in(0,1]$, of solutions to the equation
\begin{equation}\label{eq parabolic new}\tag{HJ$_\epsilon^\omega$}
u^\eps_t-\eps A\left(\frac x\eps\right) u^\eps_{x\,x}+H\left(\frac{x}{\eps}, u^\eps_x,\omega\right)=0\quad\hbox{in $(0,+\infty)\times\R$},
\end{equation}
subject to the initial condition $u^\epsilon(0,\cdot,\omega)=g\in \D{UC}(\R)$. These Cauchy problems are
$\K_*$-well-posed thanks to Theorem~\ref{teo main HJ inf H}. 

We aim to show that equation \eqref{intro eq parabolic} homogenizes
whenever this holds true with $H_{\pm}$ in place of
$H$. More precisely, we shall prove the following
  result.
\begin{teorema}\label{teo main hom}
  Let $H$ be given by \eqref{Ham}, where
  $H_\pm:\Omega\to\D{C}\left(\R\times\R\right)$ are
  stationary random fields satisfying \eqref{hpm} and such that
  $H_\pm(\cdot,\cdot,\omega)\in\Ham$ for every $\omega$, with
  constants $\gamma>1,\, \alpha_0,\,\beta_0>0$ independent of
  $\omega$.  Let us furthermore assume that
  there exist sets $\Omega_\pm$ of full measure in $\Omega$ such that
  \eqref{intro eq parabolic} with $H_\pm$ in place of $H$ homogenizes
  for all linear initial data $g(x)=\theta x$, $\theta\in\R$ and for
  every $\omega\in\Omega_\pm$, respectively.  Let us denote by
  $\overline H_\pm$ the associated (continuous and coercive) effective
  Hamiltonians.  Then there exists a continuous and coercive
  Hamiltonian $\overline H:\R\to\R$ such that, for every
  $\omega\in\Omega_-\cap\Omega_+$ and every initial datum
  $g\in\D{UC}(\R)$, the unique solutions
  $u^\eps(\cdot,\cdot,\omega)\in\D{UC}(\ccyl)$ of \eqref{intro eq
    parabolic} with the initial condition $u^\eps(0,\cdot,\omega)=g$
  converge, locally uniformly in $[0,+\infty)\times\R$ as
  $\eps\to 0^+$, to the unique solution $\overline u\in\D{UC}(\ccyl)$
  of
\begin{align*}
\begin{cases}
\overline u_t+\overline H(\overline u_x)=0 &\hbox{in $[0,+\infty)\times\R$}\\
\overline u(0,\cdot,\omega)=g & \hbox{in $\R$.}
\end{cases}
\end{align*}
Moreover,    
\[
\overline{H}(\theta)=\min\{\overline{H}_-(\theta),\overline{H}_+(\theta)\}\qquad\hbox{for every $\theta\in\R$}.
\] 
\end{teorema}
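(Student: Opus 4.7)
The overall strategy is to verify the hypotheses of Theorem~\ref{app teo hom}, or more precisely of its stationary ergodic refinement in Lemma~\ref{hom st erg}, by reducing the question of homogenization at linear initial data $g(x)=\theta x$ for $H$ to the assumed homogenization for $H_\pm$. Once homogenization at linear initial data is established a.s., Theorem~\ref{app teo hom} promotes it to arbitrary $g\in\D{UC}(\R)$ automatically.

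The key and most delicate step will be the following monotonicity claim announced in the introduction: for every fixed $\omega\in\Omega$, $\theta\geqslant 0$ (resp.\ $\theta\leqslant 0$), and $t\geqslant 0$, the map $x\mapsto u^\eps_\theta(t,x,\omega)$ is non-decreasing (resp.\ non-increasing). The pivotal structural fact I plan to exploit is that, by the pinning hypothesis $H(\cdot,0,\cdot)\equiv h_0$, for every constant $c\in\R$ the function $(t,x)\mapsto c-h_0 t$ is a classical, and hence viscosity, solution of (HJ$_\eps^\omega$). My plan is to prove the monotonicity by a viscosity doubling-of-variables argument applied to $w(t,x,y):=u^\eps_\theta(t,x,\omega)-u^\eps_\theta(t,y,\omega)$ on $\{x\leqslant y\}$: assuming $\sup w>0$, one selects, after appropriate localization and penalization in $(x,y)$ and $t$, a maximum point at some time $t^*>0$ with common contact value $c$, and then uses the $x$-independent barrier $c-h_0(t-t^*)$ together with the viscosity inequalities at the contact point to derive a contradiction. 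The absence of translation invariance in $x$ forces the barrier to be exactly constant in $x$, and it is precisely the pinning assumption that makes such barriers available; this is the main technical hurdle of the proof.

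Once the monotonicity is granted, the remainder is essentially routine. For $\theta\geqslant 0$ the viscosity inequality $(u^\eps_\theta)_x\geqslant 0$ combined with the identity $H(x,p,\omega)=H_+(x,p,\omega)$ on $\{p\geqslant 0\}$, which follows from \eqref{hpm}, shows that $u^\eps_\theta$ is also a viscosity solution of (HJ$_\eps^\omega$) with $H_+$ in place of $H$; symmetrically for $\theta\leqslant 0$ with $H_-$. By the hypothesized homogenization for $H_\pm$ at linear initial data, we therefore obtain, for $\omega$ in a set of full measure,
\[
\lim_{\eps\to 0^+}u^\eps_\theta(1,0,\omega)=
\begin{cases}-\overline H_+(\theta)&\text{if }\theta\geqslant 0,\\ -\overline H_-(\theta)&\text{if }\theta\leqslant 0,\end{cases}
\]
which defines a candidate effective Hamiltonian $\overline H:\R\to\R$. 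To upgrade this pointwise convergence to the locally uniform convergence required by \eqref{app hom linear data} and to extend it to general initial data, I will invoke Lemma~\ref{hom st erg}. Its equi-continuity hypothesis \eqref{uc} (and also condition (L$'$) of Proposition~\ref{app prop useful}) is supplied, uniformly in $\omega$, by Theorem~\ref{teo main HJ inf H} and Remark~\ref{oss Lip dependence}: since $g(x)=\theta x\in\Lip^2(\R)$, the function $u_\theta(\cdot,\cdot,\omega)$ is Lipschitz with a constant depending only on $|\theta|$ and the structural constants of the class $\Ham$, and hence not on $\omega$. The rescaling relation in Remark~\ref{oss rescaling} transfers this bound to $u^\eps_\theta$ uniformly in $\eps\in(0,1]$ and $\omega$. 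Lemma~\ref{hom st erg} followed by Theorem~\ref{app teo hom} then deliver homogenization for all $g\in\D{UC}(\R)$, and the continuity and coercivity of $\overline H$ come for free from Proposition~\ref{app prop useful}.

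Finally, the identification $\overline H=\min\{\overline H_-,\overline H_+\}$ is straightforward. The inequality $\overline H\leqslant\min\{\overline H_-,\overline H_+\}$ follows from $H\leqslant H_\pm$ and the comparison principle (Proposition~\ref{prop comp in Ham}) applied to $u^\eps_\theta$ and the solutions $u^{\eps,\pm}_\theta$ of the $H_\pm$-versions of (HJ$_\eps^\omega$) with the same linear initial datum: one gets $u^\eps_\theta\geqslant u^{\eps,\pm}_\theta$, and passing to the limit at $(t,x)=(1,0)$ yields the claim. The reverse inequality is immediate on each half-line, since $\overline H$ has just been identified with $\overline H_+$ on $\{\theta\geqslant 0\}$ and with $\overline H_-$ on $\{\theta\leqslant 0\}$.
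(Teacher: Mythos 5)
Your high-level plan is sound and, from the point where monotonicity of $x\mapsto u^\eps_\theta(t,x,\omega)$ is granted onwards, it matches the paper's proof almost verbatim: monotonicity forces $u^\eps_\theta$ to solve the $H_\pm$-equation according to the sign of $\theta$, Theorem~\ref{teo main HJ inf H} and Remark~\ref{oss Lip dependence} supply (L$'$) with $\kappa_r$ independent of $\omega$, Lemma~\ref{hom st erg} upgrades the a.s.\ scalar limit to the locally uniform convergence \eqref{app hom linear data}, Theorem~\ref{app teo hom} and Proposition~\ref{app prop useful} do the rest, and the identification $\overline H=\min\{\overline H_-,\overline H_+\}$ follows by the two comparison inequalities you describe.

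The genuine gap is in your proof of the monotonicity itself (Proposition~\ref{prop linear} in the paper). A doubling-of-variables argument applied to $w(t,x,y)=u^\eps_\theta(t,x)-u^\eps_\theta(t,y)$ on $\{x\leqslant y\}$ runs into exactly the obstruction you flag but do not overcome: the coefficients $A(x/\eps)$ and $H(x/\eps,\cdot)$ are genuinely $x$-dependent (and not translation-invariant), so at an interior maximum $(t^*,x^*,y^*)$ with $x^*<y^*$ the viscosity inequalities involve the Hamiltonian and diffusion evaluated at the two \emph{different} points $x^*/\eps$ and $y^*/\eps$, and these do not cancel; the usual penalization produces error terms of order $|x^*-y^*|/\eps$ that do not vanish. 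The constant-in-$x$ barrier $c-h_0(t-t^*)$ made available by pinning is indeed a solution, but it is not sub- or super-tangent to $u^\eps_\theta$ at the contact point in any sense that you can exploit; it does not relate $u^\eps_\theta(t^*,x^*)$ to $u^\eps_\theta(t^*,y^*)$, so the contradiction does not materialize. In other words, pinning is used in the theorem, but not in the way your sketch suggests.

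The paper circumvents this by a regularization rather than a doubling argument. Mollify $g$, $A$, and $H$ (keeping the pinning by subtracting $\hat H_n(x,0)$), so that the regularized Cauchy problem has a smooth classical solution $w_n$ that is uniformly Lipschitz. Differentiate the PDE in $x$: the derivative $v_n=(w_n)_x$ then solves a \emph{linear} parabolic equation of the form $(v_n)_t-A_n(v_n)_{xx}+b_n(t,x)(v_n)_x+c_n(t,x)v_n=0$, where the zeroth-order coefficient is extracted using $I_n(x,0,0)=0$, which is precisely where the pinning $H(\cdot,0)\equiv h_0$ enters. The classical parabolic maximum principle then propagates the sign of $v_n(0,\cdot)=g_n'\geqslant 0$ forward in time, and passing to the limit $n\to\infty$ gives monotonicity of $w$. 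You should replace your doubling sketch by this linearization argument (or supply a genuinely different proof that actually handles the spatial inhomogeneity); as written, the monotonicity claim is unsupported.
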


Before dealing with the proof of Theorem \ref{teo main hom}, we derive some simple consequences.

{\begin{cor}\label{cor Hpmconvex} 
Let $H$ be as in the statement of Theorem \ref{teo main hom} and let us additionally assume that 
$H_\pm$ are convex in $p$, or level-set convex in $p$ if
    $A\equiv 0$. Then there exists a set $\hat\Omega$ of full measure in $\Omega$ such that, for every $\omega\in\hat\Omega$, equation \eqref{intro eq parabolic} homogenizes for all $g\in\D{UC}(\R)$.
\end{cor}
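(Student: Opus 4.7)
The plan is to reduce the statement directly to Theorem \ref{teo main hom} by invoking the known homogenization theory for convex (or level-set convex) stationary ergodic Hamiltonians applied separately to $H_+$ and $H_-$. Since Theorem \ref{teo main hom} assumes nothing about the convexity of $H_\pm$ and only requires that homogenization take place for linear initial data on sets of full measure $\Omega_\pm$, all that is left for the corollary is to produce such sets from the existing literature and take their intersection.

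First, I would check that $H_\pm$ satisfy the hypotheses under which convex/level-set convex stationary ergodic homogenization is known. By assumption, $H_\pm(\cdot,\cdot,\omega)\in\Ham$ with constants $\gamma>1,\ \alpha_0,\beta_0>0$ independent of $\omega$, and $H_\pm$ are stationary. In particular, (H1)-(H2) hold uniformly in $\omega$, and the Cauchy problems for the equations with $H_\pm$ in place of $H$ are $\K_*$-well-posed by Theorem~\ref{teo main HJ inf H}. When $A\not\equiv 0$, the convexity of $H_\pm$ in $p$, together with the fact that $H_\pm\in\Ham$, places us in the framework of the viscous homogenization results, for instance in \cite{KRV, LS_viscous, AT15}. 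When $A\equiv 0$, the hypothesis can be weakened to level-set convexity, and the non-viscous homogenization results of \cite{DS09, Sou99, RT} apply in one space dimension.

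From these results, for each $\theta\in\R$ there exist continuous and coercive effective Hamiltonians $\overline H_\pm:\R\to\R$ and sets $\Omega^\theta_\pm$ of full measure such that, for every $\omega\in\Omega^\theta_\pm$, the solution $u^\eps_{\pm,\theta}(\cdot,\cdot,\omega)$ of (HJ$_\epsilon^\omega$) with Hamiltonian $H_\pm$ and initial datum $\theta x$ converges locally uniformly in $\ccyl$ to $\theta x - t\overline H_\pm(\theta)$. Taking a countable intersection over $\theta\in\Q$ and using Lemma~\ref{hom st erg} (whose equi-continuity hypothesis follows from the Lipschitz estimate of Theorem~\ref{teo main HJ inf H} together with Remark~\ref{oss Lip dependence}, since $\theta x$ has bounded first and second derivatives and the resulting Lipschitz constant of $u_\theta$ is locally bounded in $\theta$), one gets a single set $\Omega_\pm$ of full measure on which this convergence holds simultaneously for every $\theta\in\R$.

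Finally, I would set $\hat\Omega:=\Omega_-\cap\Omega_+$, which has full measure, and apply Theorem~\ref{teo main hom} directly: for every $\omega\in\hat\Omega$ and every $g\in\D{UC}(\R)$, the solutions $u^\eps(\cdot,\cdot,\omega)$ of \eqref{intro eq parabolic} converge locally uniformly to the unique solution of the effective equation with effective Hamiltonian $\overline H(\theta)=\min\{\overline H_-(\theta),\overline H_+(\theta)\}$. The only mild obstacle is the bookkeeping needed to upgrade the convergence at $(t,x)=(1,0)$ provided by the cited theorems to the locally uniform convergence in $\ccyl$ demanded by Theorem~\ref{teo main hom}, but this is precisely what Lemma~\ref{hom st erg} is designed to handle once equi-continuity in $x$ of $u^\eps_{\pm,\theta}(1,\cdot,\omega)$ is in place, which is guaranteed by the $\omega$-uniform Lipschitz bounds from Theorem~\ref{teo main HJ inf H}.
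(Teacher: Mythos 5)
Your proof is correct and follows the same route as the paper's own argument: reduce to Theorem~\ref{teo main hom} by invoking the cited stationary ergodic homogenization results for convex (and level-set convex when $A\equiv 0$) Hamiltonians to produce the sets $\Omega_\pm$, then intersect. The paper states this in one sentence; you merely spell out the bookkeeping (countable intersection over $\theta\in\Q$, upgrading convergence via Lemma~\ref{hom st erg}, equi-Lipschitz bounds from Theorem~\ref{teo main HJ inf H} and Remark~\ref{oss Lip dependence}), all of which is correct but implicit in the paper's appeal to the cited references.
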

}

\begin{proof}
  The assertion immediately follows from Theorem~\ref{teo main hom}
  and homogenization results for viscous and non-viscous
  Hamilton-Jacobi equations with stationary ergodic convex (or level
  set convex if $A\equiv 0$) Hamiltonians \cite{AT15, DS09, KRV,
    LS_viscous,RT,Sou99}.
\end{proof}

\begin{oss}\label{oss non convex effective H}
  Note that the effective Hamiltonian $\overline H$ associated to
 \eqref{intro eq parabolic} is, in general, neither convex nor even level-set convex. Indeed,
  let $b(\cdot,\omega)\in \D{Lip}(\R)$ with a Lipschitz constant
  independent of $\omega$, $a\leqslant b(x,\omega)\leqslant 1/a$ in
  $\R\times\Omega$ for some constant $a\in(0,1)$. Let $H_\pm(x,p,\omega)=|p|^2/2\mp b(x,\omega)p$ and define
  \[
  H(x,p,\omega)=\min\{H_+(x,p,\omega),H_-(x,p,\omega)\}=\frac{|p|^2}{2}-b(x,\omega)|p|\quad\text{in
    $\R^2\times\Omega$.}
  \]
  Then $H,H_\pm\in{\cal H}(2,\alpha_0,\beta_0)$ for some
  $\alpha_0,\beta_0>0$. Set $\hat H(p):=|p|^2/2-a|p|$. Let us denote
  by $u^\eps_\theta$ and $\hat u^\eps_\theta$ the solutions to
 \eqref{intro eq parabolic} with $A=1$ and Hamiltonians $H$ and $\hat H$
  respectively, and the initial condition $\theta x$. Since
  $H\leqslant \hat H$, by the Comparison Principle stated in
  Proposition~\ref{prop comp in Ham} we infer that
  $\hat u^\eps_\theta\leqslant u^\eps_\theta$. Therefore,
\[
 \overline H(\theta)=\lim_{\eps\to 0}-u^\eps(1,0,\omega)\leqslant \lim_{\eps\to 0}-\hat u^\eps(1,0)= \hat H(\theta)\qquad\hbox{a.s. in $\omega$,}
\]
in particular $\overline H(\pm a)\leqslant \hat H(\pm a)<0$. Since $\overline H(0)=0$ and $\overline H$ is coercive, the assertion follows.\medskip 
\end{oss}

We now proceed to prove Theorem \ref{teo main hom}.  The idea we are
going to exploit is that, if the initial datum $g$ is monotone, the
solution of the corresponding Cauchy problem associated to
\eqref{intro eq parabolic} enjoys the same kind of monotonicity as $g$
with respect to the $x$ variable.  Due to the scaling
  relation 
$u^\eps_{\theta}(t,x)=\eps u_\theta(t/\eps,x/\eps,\omega)$, it
suffices to consider the case $\eps=1$.

\begin{prop}\label{prop linear}
  Let $A$ satisfy (A1)-(A2) and $H\in\Ham$ {be such that
  $H(\cdot,0)\equiv h_0$ on $\R$, for some constant $h_0\in\R$}.  Denote by $w$ the unique
  continuous solution in $\K_*$ of the Cauchy problem
\begin{equation}\label{eq linear}
\
\begin{cases}
w_t- A(x)w_{xx}+H(x,w_x)=0&\hbox{in $(0,+\infty)\times\R$},  \\
w(0,x)=g(x)& \text{in $\R$},
\end{cases}
\end{equation}
where $g\in{\Lip^2}(\R^d)$.
Then
\begin{itemize}
\item[\em (i)] if $g'\geqslant 0$, then $w(t,\cdot)$ is nondecreasing, for every fixed $t\geqslant 0$;\smallskip
\item[\em (ii)] if $g'\leqslant 0$, then $w(t,\cdot)$ is nonincreasing, for every fixed $t\geqslant 0$.
\end{itemize}
\end{prop}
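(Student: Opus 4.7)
The plan is to prove (i) by a doubling of variables argument in the viscosity sense. Part (ii) follows by symmetry: apply (i) to $\tilde w(t,x):=w(t,-x)$, which solves the analogous Cauchy problem with $A(-\cdot)$, $\tilde H(x,p):=H(-x,-p)\in\Ham$ (still pinned at $0$), and non-decreasing initial datum $\tilde g:=g(-\cdot)$. After the change of unknown $w\mapsto w+h_0 t$ we may also assume $h_0=0$. The only way the pinning condition will enter the argument is through the bound, a consequence of $h_0=0$ and condition (iii) of Definition~\ref{Fam}:
\[
|H(x,p)|\leqslant\beta_0(|p|+1)^{\gamma-1}|p|\qquad\text{for all }(x,p)\in\R^2,
\]
so that $H(x,p)\to 0$ uniformly in $x$ as $p\to 0$.

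The first step is a reduction to bounded $w$. Set $g_n:=\rho_n\circ g$, where $\rho_n(s):=n\rho(s/n)$ for a fixed smooth non-decreasing $\rho$ with $\rho(s)=s$ for $|s|\leqslant 1$ and $\rho$ constant outside $|s|\leqslant 2$. Each $g_n$ is non-decreasing and bounded, $g_n\to g$ locally uniformly, and a direct computation shows that $\|g_n'\|_\infty$ and $\|g_n''\|_\infty$ remain bounded uniformly in $n$. By Theorem~\ref{teo main HJ inf H} and Remark~\ref{oss Lip dependence} the corresponding solutions $w_n$ are equi-Lipschitz on compact subsets of $\ccyl$; combining Arzel\`a-Ascoli with the comparison in Proposition~\ref{prop comp in Ham} gives $w_n\to w$ locally uniformly. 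Since $h_0=0$, constants are classical solutions, and comparison yields $\|w_n\|_\infty\leqslant\|g_n\|_\infty<\infty$. Monotonicity of each $w_n$ then passes to the limit, so it suffices to treat the case $w$ bounded.

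Assume now $w$ is bounded and argue by contradiction: suppose $w(t_0,x_0)>w(t_0,y_0)$ for some $t_0>0$ and $x_0<y_0$. For $\varepsilon,\delta>0$ consider
\[
\Phi_{\varepsilon,\delta}(t,x,y):=w(t,x)-w(t,y)-\varepsilon t-\delta\bigl(\sqrt{1+x^2}+\sqrt{1+y^2}\bigr)
\]
on $\{(t,x,y):t\geqslant 0,\,x\leqslant y\}$. Boundedness of $w$ and the linear growth of the penalty guarantee that the supremum of $\Phi_{\varepsilon,\delta}$ is attained at some $(t^*,x^*,y^*)$; for $\varepsilon,\delta$ small, this supremum is positive, while $\Phi_{\varepsilon,\delta}(0,\cdot,\cdot)\leqslant 0$ (by $g'\geqslant 0$) and $\Phi_{\varepsilon,\delta}|_{\{x=y\}}<0$, forcing $t^*>0$ and $x^*<y^*$. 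Crandall-Ishii's lemma (preceded by the standard time-doubling with penalty $(t-s)^2/\lambda\to 0$) produces $(a_1,p_1,X)\in\bar{\mathcal{P}}^{2,+}w(t^*,x^*)$ and $(a_2,p_2,Y)\in\bar{\mathcal{P}}^{2,-}w(t^*,y^*)$ with $a_1-a_2=\varepsilon$, $|p_1|,|p_2|\leqslant\delta$, $X\leqslant C\delta$, $Y\geqslant -C\delta$, where $C$ is an absolute constant coming from the diagonal structure of $D^2\bigl[\delta(\sqrt{1+x^2}+\sqrt{1+y^2})\bigr]$. Subtracting the sub- and supersolution inequalities and using (A1) together with the pinning bound $|H(x^*,p_1)|,|H(y^*,p_2)|\leqslant C'\delta$ yields $\varepsilon\leqslant C''\delta$; sending $\delta\to 0^+$ gives the contradiction $\varepsilon\leqslant 0$.

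The main obstacle, and the reason for the preliminary truncation, is the choice of penalty. A quadratic penalty $\delta(x^2+y^2)$ would automatically confine the maximizing point, but the associated gradients $\delta x^*,\delta y^*$ need not vanish as $\delta\to 0^+$ (Lipschitzianity of $w$ only forces $\delta\max(|x^*|,|y^*|)$ to be bounded), thus nullifying the pinning information encoded in $H(\cdot,0)\equiv 0$. The chosen linear-growth penalty has gradients of order $\delta$ as required, but dominates a Lipschitz $w$ at infinity only when $w$ is itself bounded.
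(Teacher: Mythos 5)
Your proof is correct but takes a genuinely different route from the paper's. The paper mollifies $g$, $A$, and $H$ (adding $1/n$ to $A$ to restore strict parabolicity), invokes classical parabolic regularity to obtain a smooth solution $w_n$, differentiates the equation in $x$, and applies the classical strong maximum principle to the resulting linear homogeneous parabolic equation satisfied by $(w_n)_x$; the pinning enters through $\partial_x H_n(\cdot,0)\equiv 0$, which is what makes that linear equation homogeneous (no zero-order source). You, by contrast, stay entirely at the viscosity level: after a truncation of $g$ that reduces to bounded $w$, you run a doubling-of-variables argument comparing $w(t,x)$ with $w(t,y)$ for $x\leqslant y$, and the decisive point is the choice of a penalty of \emph{linear} growth whose gradient is $O(\delta)$ at the maximizer. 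This is precisely what lets the pinning $H(\cdot,0)\equiv 0$, via condition (iii) of Definition~\ref{Fam}, kill the Hamiltonian terms as $\delta\to 0^+$, whereas a quadratic penalty would only yield gradients of size $O(1)$ for Lipschitz $w$, as you correctly note. Your approach trades the degenerate-parabolic regularity theory for the Crandall--Ishii machinery plus a truncation step. Two minor remarks: the parabolic Crandall--Ishii lemma (Theorem 8.3 of the User's Guide) already accommodates a single shared time variable, so the time-doubling preamble you mention is unnecessary; and the local uniform convergence $w_n\to w$ should be justified by the equi-Lipschitz bounds from Remark~\ref{oss Lip dependence} together with Arzel\`a--Ascoli, stability, and uniqueness in $\K_*$, rather than by comparison alone, since $g_n\to g$ only locally uniformly and not in sup-norm.
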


\begin{proof}
  The existence and uniqueness of continuous $w$ in $\K_*$ follow from
  Theorem~\ref{teo main HJ inf H}. By the same theorem, $w$ is
  Lipschitz continuous in $\ccyl$. 

  Let us now show monotonicity. Up to replacing $H$ with $H-h_0$ and
  $w$ with $w+h_0t$, we can  assume without 
    loss of generality that $H(\cdot,0)\equiv 0$ on $\R$. Suppose, for definiteness, that 
    $g'\geqslant 0$. For technical reasons we need to perform a
  regularization of $g$, $A$ and of the nonlinearity $H$. To this aim,
  take a standard sequence of mollifiers $(\rho_n)_n$ on $\R$ and set
  $g_n(x):=g\ast \rho_n(x)$ $A_n(x)=A\ast \rho_n(x)+1/n$ and
  $H_n(x,p):=\hat H_n(x,p)-\hat H_n(x,0)$ with
\[
 \hat H_n(x,p):=\int_{\R\times\R} \rho_n(y)\rho_n(q)H(x-y,p-q)\,\dd y\,\dd q,
 \quad\hbox{$(x,p)\in\R\times\R.$}
\]
An easy check shows that $A_n$ satisfies (A1)-(A2) with
$\Lambda_{A_n}=\Lambda_A+1$ and that $H_n$ belongs to the
class $\Ham$  {with} constants $\alpha_0,\beta_0$ independent of $n$, possibly different from the ones assumed for 
$H$ at the beginning. Thus, the conditions of Theorem~\ref{teo main HJ
  inf H} are satisfied. Moreover, for every $R>0$, there exists a
constant $C(R,n)>0$ such that
\begin{equation}\label{auxiliary estimates}
  |\partial_x A_n|,\ |\partial_p H_n|,\ |\partial^2_p H_n|,\ |\partial^2_{x\,p} H_n|\leqslant C(R,n)\qquad\hbox{for all $x\in\R$ and $p\in B_R$.}
\end{equation}
Let us denote by $w_n$ the unique continuous solution of class $\K_*$
of the problem \eqref{eq linear} with $H_n$ in place of $H$.
Since $\sup_n \|Dg_n\|_\infty+\|D^2g_n\|_\infty<+\infty$, from 
Theorem~\ref{teo main HJ inf H} and Remark~\ref{oss Lip dependence} 
we infer that the solutions $w_n$ are equi-Lipschitz in $[0,+\infty)\times\R$.

By standard regularity results for parabolic equations,  we know that $w_n$ is also a smooth, classical solution of the
problem \eqref{eq linear} with $H_n$ in place of $H$.  
Moreover, since
\begin{equation}\label{en}
A_n(x)\left(w_n\right)_{xx}= \left(w_n\right)_t+H_n(x,\left(w_n\right)_x)\qquad \hbox{for all $(t,x)\in(0,+\infty)\times\R$,}
\end{equation}
$A_n(x)\geqslant 1/n$, and $w_n\in\Lip([0,+\infty)\times\R)$, we
infer that $\left(w_n\right)_{xx}$ is bounded on $[0,+\infty)\times\R$
(with a bound depending on $n$). By differentiating equation \eqref{en}
with respect to $x$, we get that the function
$v_n:=\left(w_n\right)_x$ solves the following Cauchy problem:
\begin{eqnarray}\label{auxiliary cauchy pb}
\begin{cases}
\left(v_n\right)_{t}- A_n(x) \left(v_n\right)_{xx}+I_n(x,v_n,\left(v_n\right)_x)=0 & \hbox{in $(0,+\infty)\times\R$}\medskip\\
v_n(0,x)=g_n'(x)\geqslant 0& \hbox{in $\R$,}
\end{cases}
\end{eqnarray}
with $I_n(x,p,\xi):=\partial_x H_n(x,p)+(\partial_p H_n(x,p)-\partial_xA_n(x))\xi$ for $(x,p,\xi)\in\R\times\R\times\R$. Now notice that $\partial_x H_n(x,0)=0$ for every $x\in\R$ since $H_n(\cdot,0)\equiv 0$ on $\R$, in particular $I_n(x,0,0)=0$. So we have  
\begin{eqnarray*}
I_n(x,v_n,\left(v_n\right)_x)=\int_0^1\frac{\dd}{\dd s} \big(I_n(x,sv_n,s\left(v_n\right)_x)\big)\,\dd s=b_n(t,x) \left(v_n\right)_x+c_n(t,x)v_n 
\end{eqnarray*}
with
\begin{align*}
b_n(t,x):=\int_0^1 \partial_\xi I_n\big(x,sv_n(t,x),s\left(v_n\right)_x(t,x)\big)\,\dd s\\
c_n(t,x):=\int_0^1 \partial_p I_n\big(x,sv_n(t,x),s\left(v_n\right)_x(t,x)\big)\,\dd s.
\end{align*}
Therefore $v_n$ is a bounded solution of the following linear {homogeneous}
parabolic equation
\[
\left(v_n\right)_{t}- A_n(x)\left(v_n\right)_{xx}+b_n(t,x) \left(v_n\right)_x+c_n(t,x)v_n=0 \qquad \hbox{in $(0,+\infty)\times\R$} 
\]
with coefficients $b_n(t,x)$ and $c_n(t,x)$ that are continuous and
bounded in $(0,+\infty)\times\R$, in view of \eqref{auxiliary
  estimates} and of the fact that $v_n$ and $\left(v_n\right)_x$ are
bounded on $[0,+\infty)\times\R$.  By the classical maximum principle
(see, for instance, \cite[Ch.\,2, Sec.\,4, Th.\,9]{Friedman}) we
conclude that $\left(w_n\right)_x(t,x)=v_n(t,x)\geqslant 0$ in
$[0,+\infty)\times\R$, thus, proving the asserted monotonicity of
$w_n$ in $x$. Now we pass to the limit in $n$: since $g_n\ucv g$ and
$A_n\ucv A$ in $\R$, $H_n\ucv H$ in $\R\times\R$, and the functions
$w_n$ are equi-Lipschitz in $[0,+\infty)\times\R$, we infer that
$w_n\ucv w$ in $[0,+\infty)\times\R$. The desired
monotonicity of $w(t,\cdot)$ {follows}.  The proof of  (ii) is analogous.
\end{proof}  

As an easy consequence, we derive the following crucial result. In
particular, it implies that condition (L$'$) holds with $\kappa_r$
independent of $\omega$.

\begin{prop}\label{prop Lip estimates}
  For every $\theta\in\R$, let us denote by $u_\theta(t,x,\omega)$ the
  unique uniformly continuous solution of (HJ$_1^\omega$)
  satisfying $u_\theta(0,x,\omega)=\theta x$ for $x\in\R$ and
  $\omega\in\Omega$. The following holds:
\begin{itemize}
\item[\em (i)] if $\theta\geqslant 0$, then $u_\theta$ is also a solution of 
\begin{equation*}
u_t- A(x,\omega)u_{xx}+H_+(x,u_x,\omega)=0\qquad \hbox{in $(0,+\infty)\times\R$};
\end{equation*}
\item[\em (ii)] if $\theta\leqslant 0$, then $u_\theta$ is also a solution of 
\begin{equation*}
u_t- A(x,\omega)u_{xx}+H_-(x,u_x,\omega)=0\qquad \hbox{in $(0,+\infty)\times\R$}.
\end{equation*}
\end{itemize}
Moreover, for every $\omega$, the function $u_\theta(\cdot,\cdot,\omega)$ is Lipschitz continuous on $[0,+\infty)\times\R$, with a Lipschitz constant independent of $\omega$ and locally bounded with respect to $\theta\in\R^d$. 
\end{prop}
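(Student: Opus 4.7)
My plan is to derive both the spatial monotonicity of $u_\theta(t,\cdot,\omega)$ and the identification with the $H_\pm$-equations from the structural relations $H=\min\{H_+,H_-\}$, $H(\cdot,0,\cdot)\equiv h_0$, and the fact that $H$ coincides with $H_+$ (resp.\ $H_-$) on $\{p\geqslant 0\}$ (resp.\ $\{p\leqslant 0\}$).

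First I would apply Proposition~\ref{prop linear} to the linear initial datum $g(x)=\theta x$, which belongs to $\Lip^2(\R)$ with $g'\equiv\theta$ and $g''\equiv 0$, and for which the hypothesis $H(\cdot,0,\omega)\equiv h_0$ is satisfied for every $\omega$. This yields, for each fixed $\omega$, that $u_\theta(t,\cdot,\omega)$ is nondecreasing when $\theta\geqslant 0$ and nonincreasing when $\theta\leqslant 0$, for every $t\geqslant 0$.

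For case (i), the supersolution property for the $H_+$-equation is automatic: since $H\leqslant H_+$ pointwise, at any $\D{C}^2$ subtangent $\phi$ to $u_\theta(\cdot,\cdot,\omega)$ at some $(t_0,x_0)\in\cyl$ we get
\[
\phi_t-A(x_0,\omega)\phi_{xx}+H_+(x_0,\phi_x,\omega)\geqslant \phi_t-A(x_0,\omega)\phi_{xx}+H(x_0,\phi_x,\omega)\geqslant 0.
\]
The substantive point is the subsolution property for $H_+$, where the monotonicity is essential. Given a $\D{C}^2$ supertangent $\phi$ to $u_\theta(\cdot,\cdot,\omega)$ at $(t_0,x_0)\in\cyl$, the local maximum inequality and the monotonicity of $u_\theta(t_0,\cdot,\omega)$ in $x$ yield, for all small $h>0$,
\[
\phi(t_0,x_0+h)-\phi(t_0,x_0)\geqslant u_\theta(t_0,x_0+h,\omega)-u_\theta(t_0,x_0,\omega)\geqslant 0;
\]
dividing by $h$ and letting $h\to 0^+$ gives $\phi_x(t_0,x_0)\geqslant 0$. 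Since $H(x_0,p,\omega)=H_+(x_0,p,\omega)$ for $p\geqslant 0$, the subsolution inequality satisfied by $u_\theta$ for $H$ immediately upgrades to the corresponding inequality for $H_+$. Case (ii) is entirely symmetric, using instead one-sided differences with $h<0$ (or equivalently the nonincreasing monotonicity) to force $\phi_x(t_0,x_0)\leqslant 0$ at supertangents.

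For the Lipschitz bound, since $g(x)=\theta x$ lies in $\Lip^2(\R)$ with $\|g'\|_\infty=|\theta|$ and $\|g''\|_\infty=0$, Theorem~\ref{teo main HJ inf H} together with Remark~\ref{oss Lip dependence} gives a Lipschitz constant $\kappa=\widetilde\kappa(|\theta|,0)$ on $\ccyl$ depending only on $\Lambda_A$, $\alpha_0$, $\beta_0$, $\gamma$ (all independent of $\omega$) and locally bounded as a function of $|\theta|$. I do not anticipate any genuine obstacle: the only step demanding a moment of care is the passage from spatial monotonicity of $u_\theta$ to the sign of $\phi_x$ at a smooth supertangent, which is handled by the one-sided difference-quotient argument above.
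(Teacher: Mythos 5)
Your proposal is correct and follows essentially the same route as the paper: apply Proposition~\ref{prop linear} with $g(x)=\theta x$ to obtain spatial monotonicity, then observe that $H$ coincides with $H_\pm$ on $\{\pm p\geqslant 0\}$, and finally invoke Theorem~\ref{teo main HJ inf H} and Remark~\ref{oss Lip dependence} for the uniform Lipschitz bound. The paper's own proof is extremely terse at the step of transferring the viscosity sub/supersolution inequalities from $H$ to $H_\pm$; your one-sided difference-quotient argument showing $\phi_x(t_0,x_0)\geqslant 0$ at supertangents (and the quick use of $H\leqslant H_+$ for the supersolution half) is precisely the detail the paper leaves implicit, and it is carried out correctly.
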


\begin{proof}
  Let $\omega$ be fixed. Items (i) and (ii) follow by applying Proposition \ref{prop linear} with $g(x):=\theta x$ 
  and from the fact that $H(x,p,\omega)=H_+(x,p,\omega)$ for $p\geqslant 0$, 
  $H(x,p,\omega)=H_-(x,p,\omega)$ for $p\leqslant 0$.  The asserted Lipschitz estimates for
  $u_\theta(\cdot,\cdot,\omega)$ follow from Theorem~\ref{teo main HJ
    inf H} and Remark \ref{oss Lip dependence}.
\end{proof}

\begin{oss}\label{oss H_pm}
The Lipschitz properties of $u_\theta(\cdot,\cdot,\omega)$ stated in the above proposition are 
independent of the fact that $H$ is of the form \eqref{Ham}. Indeed, it suffices that 
$H(\cdot, \cdot, \omega)$ belongs to $\Ham$, with constants $\gamma>1,\, \alpha_0,\,\beta_0>0$ independent of $\omega$. 
In particular, if the equation \eqref{intro eq parabolic} associated to such a $H$ homogenizes, the effective Hamiltonian 
$\overline H$ is continuous and coercive, according to Proposition \ref{app prop useful}.
\end{oss} 

We are now in a position to prove our basic
homogenization result.\smallskip

\noindent{\em Proof of Theorem \ref{teo main hom}.} 
  Let us fix $\theta\in\R$ and denote by
  $u^\eps_\theta(\cdot,\cdot,\omega)$ the unique uniformly continuous
  solution of\eqref{intro eq parabolic} satisfying
  $u^\eps_\theta(0,x,\omega)=\theta x$ in $\R$. Notice that
  $u^\eps_\theta$ is identically 0 when $\theta=0$. Let us then assume
  $\theta\not=0$. Since
  $u^\eps_{\theta}(t,x)=\eps u_\theta(t/\eps,x/\eps,\omega)$, it is
  easily seen, in view of Proposition \ref{prop Lip estimates}, that
  $u^\eps_\theta(\cdot,\cdot,\omega)$ solves, for every fixed
  $\omega$, the following equation
\begin{equation}\label{eq convex parabolic}
u^\eps_t- \eps A\left(\frac{x}{\epsilon},\omega\right)u^\eps_{xx}+H_\pm\left(\frac{x}{\eps}, u^\eps_x,\omega\right)=0\quad\hbox{in $(0,+\infty)\times\R$},
\end{equation}
according to the sign of $\theta$. By our assumptions, for each
$\omega\in\Omega_+\cap\Omega_-$ both equations in \eqref{eq convex
  parabolic} homogenize for all linear initial data $g(x)=\theta x$
with coercive and continuous $\overline{H}_\pm$, see Remark \ref{oss
  H_pm}.  Let us set \ $\overline H(p):=H_+(p)$ if $p\geqslant 0$ and
\ $\overline H(p):=H_-(p)$ if $p\leqslant 0$. The Hamiltonian
$\overline H$ is coercive and continuous, since
$\overline H_+(0)=\overline H_-(0)=0$.  Moreover, the functions
$u^\eps(\cdot,\cdot,\omega)$ satisfy condition (L$'$), for a constant
$\kappa_r$ independent of $\omega$, in view of Proposition~\ref{prop
  Lip estimates}. By Theorem~\ref{app teo hom} and
Proposition~\ref{app prop useful} the asserted homogenization result
follows.
 
To prove the formula for $\overline H$, we first remark that, since
$\overline{H}(\theta)=\overline{H}_+(\theta)$ for $\theta\geqslant 0$
and $\overline{H}(\theta)=\overline{H}_-(\theta)$ for
$\theta\leqslant 0$, we have
$\overline{H}(\theta)\geqslant\min\{\overline{H}_-(\theta),\overline{H}_+(\theta)\}$.
On the other hand, by comparison, we infer
$u^\epsilon_\theta\geqslant u^\epsilon_{\theta\pm}$, where
$u^\epsilon_{\theta\pm}$ are respectively solutions of \eqref{eq
  convex parabolic} with the initial datum $\theta x$. From the fact
that
\[
    \overline H(\theta)=\lim_{\eps\to 0^+} -u^\eps_\theta(1,0,\omega),
    \qquad
    {\overline H}_\pm(\theta)=\lim_{\eps\to 0^+} -u^\eps_{\theta\pm}(1,0,\omega)\qquad\hbox{a.s. in $\Omega$,}
\]
we also get the opposite inequality.\qed\\

Next, we extend the homogenization result stated in Corollary \ref{cor Hpmconvex} to the case of non-convex
  Hamiltonians with multiple {\em pinning points}.

\begin{definition}\label{def pinned}
  Let $H:\Omega\to \D{C}(\R^d\times\R^d)$ be a measurable random field. We shall say that $H(x,p,\omega)$ is {\em
  pinned} at $p_0$ if there is a constant $h_0\in\R$ such that
$H(\cdot,p_0,\cdot)\equiv h_0$ \ on $\R\times\Omega$. 
\end{definition}

The following simple observation will be very useful.  

\begin{oss}\label{shifts}
  If $H$ is pinned at $p_0\not=0$, then
  $\tilde H(x,p,\omega):=H(x,p+p_0,\omega)$ is pinned at
  $0$. Moreover, the solutions $u$ and $\tilde u$ of (HJ$_1^\omega$)
  with Hamiltonians $H$ and $\tilde H$, respectively, are related as
  follows:
\[u(t,x,\omega)=\tilde u(t,x,\omega)+p_0x.
\]
This implies that \eqref{intro eq parabolic} with Hamiltonian $H$ homogenizes if
and only if \eqref{intro eq parabolic} with Hamiltonian $\tilde H$ homogenizes.
\end{oss}
We are now ready to
  prove our final homogenization result.
\begin{teorema}\label{teo hom final}
\begin{sloppypar}
Let $H:\Omega\to\D{C}\left(\R\times\R\right)$ be a stationary random
  field satisfying $H(\cdot,\cdot,\omega)\in\Ham$ for every $\omega$,
  with constants $\gamma>1,\, \alpha_0,\,\beta_0>0$ independent of
  $\omega$.  Let us furthermore assume that
\end{sloppypar}
\begin{itemize}
 \item[(i)] $H$ is pinned at $p_1<p_2<\dots<p_n$;\smallskip 
 \item[(ii)] $H(x,\cdot,\omega)$ is convex (or level-set convex if
$A\equiv 0$) on each of the intervals $(-\infty,p_1),$\ $(p_1,p_2)$,\
$\dots,\ (p_{n},+\infty)$, for every $(x,\omega)\in\R\times\Omega$.\smallskip 
\end{itemize}
Then there exists a set $\hat\Omega$ of full measure in $\Omega$ 
such that, for every $\omega\in\hat\Omega$, equation \eqref{intro eq parabolic} homogenizes   
for all $g\in\D{UC}(\R)$.
\end{teorema}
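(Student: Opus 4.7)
The plan is to argue by induction on the number of pinning points $n\geqslant 1$, using Theorem~\ref{teo main hom} as the engine combining Hamiltonians with strictly fewer pinning points, and invoking Corollary~\ref{cor Hpmconvex} at the base. For $n=1$, Remark~\ref{shifts} lets us shift to $p_1=0$, and $H$ can be written as $\min\{H_+,H_-\}$ for stationary random fields $H_\pm\in\Ham$ that are fully convex (resp.\ level-set convex when $A\equiv 0$) extensions of $H|_{\R\times[0,+\infty)}$ and $H|_{\R\times(-\infty,0]}$, respectively; the conclusion then follows directly from Corollary~\ref{cor Hpmconvex}.

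For the inductive step, assume the statement for Hamiltonians with at most $n-1$ pinning points, and let $H$ satisfy (i)-(ii) with $n\geqslant 2$ pinning points. After shifting so that $p_n=0$, I would build two stationary Hamiltonians $H_+,H_-\in\Ham$ satisfying the hypotheses of Theorem~\ref{teo main hom}. The function $H_+$ should coincide with $H$ on $\R\times[0,+\infty)$ (where $H$ is already convex, resp.\ level-set convex, in $p$) and be extended to $p<0$ as a fully convex (resp.\ level-set convex) stationary function, pinned at $0$ with value $h_0$ and dominating $H$ on $(-\infty,0]$; a concrete formula in the convex case is $h_0+r_+(x,\omega)p+C|p|^\gamma$ with $r_+(x,\omega)\in\partial_p^+ H(x,0,\omega)$ and $C=C(\alpha_0,\beta_0,\gamma)$ chosen sufficiently large. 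Symmetrically, $H_-$ should agree with $H$ on $\R\times(-\infty,0]$ and be extended to $p>0$ by a convex (resp.\ level-set convex) stationary prolongation of the form $h_0+\ell_-(x,\omega)p+Cp^\gamma$ with $\ell_-(x,\omega)\in\partial_p^- H(x,0,\omega)$; by construction $H_-$ is then convex (resp.\ level-set convex) on the entire interval $(p_{n-1},+\infty)$, has pinning points exactly at $p_1<\dots<p_{n-1}$, and satisfies (i)-(ii) with $n-1$ pinning points. The inductive hypothesis yields almost sure homogenization for \eqref{intro eq parabolic} with $H_-$, while for $H_+$ the homogenization is provided by the classical convex results cited in Corollary~\ref{cor Hpmconvex}. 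Since $H=\min\{H_+,H_-\}$, $H_\pm(\cdot,0,\cdot)\equiv h_0$, and the sign condition $H_+(\cdot,p,\cdot)p\leqslant H_-(\cdot,p,\cdot)p$ are all immediate from the construction, Theorem~\ref{teo main hom} delivers the homogenization for $H$ on the intersection of the two sets of full measure, with $\overline H=\min\{\overline H_+,\overline H_-\}$.

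The main obstacle is to verify that the extensions $H_\pm$ can be realized as jointly measurable, stationary random fields lying in $\Ham$ with constants depending only on $\alpha_0,\beta_0,\gamma$ (and not on $\omega$), while simultaneously preserving the correct convexity (or level-set convexity) structure and the ordering $\min\{H_+,H_-\}=H$. Joint measurability and stationarity follow from the pointwise formulas once $r_\pm,\ell_\pm$ are chosen measurably; taking them to be the one-sided partial derivatives of $H(x,\cdot,\omega)$ at $p=0$ works, since these are uniformly bounded and jointly measurable in $(x,\omega)$ thanks to the convexity (resp.\ level-set convexity) of each piece together with Definition~\ref{Fam}(iii). The remaining $\Ham$ bounds, namely the coercivity and the $x$-Lipschitz estimate, then transfer from $H$ to $H_\pm$ via the explicit structure of the extensions. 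The level-set convex case with $A\equiv 0$ is handled by the same template, with monotone-then-monotone prolongations in place of the convex polynomials in $p$.
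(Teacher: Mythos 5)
Your overall scheme — induction on the number of pinning points, shifting the last pinning point to $0$, decomposing $H=\min\{H_+,H_-\}$, feeding $H_\pm$ to Theorem~\ref{teo main hom}, with the base case handled by Corollary~\ref{cor Hpmconvex} via Remark~\ref{shifts} — is exactly the paper's argument, and you correctly identify that the only nontrivial task is to produce convex stationary extensions $H_\pm\in\Ham$ with $H=\min\{H_+,H_-\}$ (the paper simply asserts this can be done without spelling out the formula). However, your proposed explicit extension does not in general satisfy the ordering $H_+\geqslant H$ on $\{p<0\}$. Write $H(x,p,\omega)=h_0+\ell_-(x,\omega)p+\phi(x,p,\omega)$ for $p$ slightly negative, with $\ell_-\in\partial_p^- H(x,0,\omega)$ and $\phi$ convex, $\phi(x,0,\omega)=0$, $\phi_p(x,0^-,\omega)=0$. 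With your choice $H_+(x,p,\omega)=h_0+r_+(x,\omega)p+C|p|^\gamma$ one has, for $p<0$,
\[
H_+-H=(\ell_--r_+)|p|+C|p|^\gamma-\phi(x,p,\omega).
\]
The hypotheses allow $\ell_--r_+$ to vanish (e.g.\ a spurious pinning point where $H$ is $C^1$ in $p$), and since $\gamma>1$ the term $C|p|^\gamma$ is $o(|p|)$, while the only available bound on $\phi$ from Definition~\ref{Fam}(iii) is of order $|p|$; thus $H_+<H$ can occur arbitrarily close to $p=0^-$, breaking the decomposition $H=\min\{H_+,H_-\}$. This is a genuine gap, because Theorem~\ref{teo main hom} requires the sign condition \eqref{hpm} (equivalently $H_+\geqslant H$ for $p\leqslant 0$), and without it the argument does not go through.

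The fix is simple and in fact removes the other issues you flag. Instead of the one-sided derivative, use a large \emph{constant} slope: for $p<0$ set
\[
H_+(x,p,\omega):=h_0-Mp+C|p|^\gamma,
\]
with $M>2\beta_0$ and $C>\max\{\alpha_0,\,2^{\gamma-1}\beta_0\}$ depending only on $\alpha_0,\beta_0,\gamma$. Convexity at $p=0$ holds since the left derivative $-M$ is $\leqslant r_0\in[-\beta_0,\beta_0]$; domination follows from Definition~\ref{Fam}(iii), since for $p<0$
\[
H_+-H\geqslant (M-\beta_0)|p|+C|p|^\gamma-\beta_0(|p|+1)^{\gamma-1}|p|>0;
\]
and since the extension is deterministic and $x$-independent (note $h_0$ is a constant by the pinning hypothesis), the stationarity, joint measurability, and $\Ham$ bounds for $H_+$ are immediate, sidestepping entirely the measurability concern about $r_\pm$. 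The symmetric modification for $H_-$ (constant slope $+M$ for $p>0$) closes the inductive step, and the rest of your argument is then correct.
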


\begin{proof}
Let us assume, to fix ideas, that $H(x,\cdot,\omega)$ is convex on each of the intervals $(-\infty,p_1),\ (p_1,p_2)$,\
$\dots,\ (p_{n},+\infty)$, for every $(x,\omega)\in\R\times\Omega$.
It follows from the hypotheses that $H$ can be written as 
\begin{equation*}\label{H multimin}
  H(x,p,\omega)=\min\{H_i(x,p,\omega)\,:\,\ 1\leqslant i\leqslant n+1\}=
\begin{cases}
  H_1(x,p,\omega)&\text{if }p\leqslant p_1;
  \\H_2(x,p,\omega)&\text{if }p_1\leqslant p\leqslant p_2;
  \\
  \dots & \dots\\
  H_{n+1}(x,p,\omega)&\text{if }p\geqslant p_{n},
\end{cases}
\end{equation*}
where  
$H_1,\dots,H_{n+1}:\Omega\to\D{C}\left(\R\times\R\right)$ 
are stationary random fields such that, for every fixed $\omega$, each $H_i(\cdot,\cdot,\omega)$ is a convex 
Hamiltonian belonging to $\Hamtilde$, with same  $\gamma$ and possibly different constants $\tilde\alpha_0,\tilde\beta_0>0$, 
independent of $\omega$ and $i\in \{1,\dots,n+1\}$. 

The proof is by induction on $n$. The case $n=1$ follows from
Corollary~\ref{cor Hpmconvex} and Remark~\ref{shifts}. Let us now
assume that the assertion holds
for $n-1$ and prove it for $n$. To this aim, notice that
$H$ can be written as
\begin{equation*}
  H(x,p,\omega)=\min\{H_-(x,p,\omega),\, H_+(x,p,\omega)\}=
\begin{cases}
  H_-(x,p,\omega)&\text{if }p\leqslant p_{n};\\
  H_+(x,p,\omega)&\text{if }p\geqslant p_{n},
\end{cases}
\end{equation*}
with $H_+:=H_{n+1}$ and $H_-$ defined as 
\begin{equation*}
  H_-(x,p,\omega):=
\begin{cases}
  H_1(x,p,\omega)&\text{if }p\leqslant p_1;
  \\H_2(x,p,\omega)&\text{if }p_1\leqslant p\leqslant p_2;
  \\
  \dots & \dots\\
  H_{n}(x,p,\omega)&\text{if }p\geqslant p_{n-1}.
\end{cases}
\end{equation*}
By the inductive hypothesis, the assertion holds both for $H_-$ and $H_+$. Another
application of Remark~\ref{shifts} together with Theorem \ref{teo hom final} 
yields that the assertion holds for $H$ as well. 
The case when $A\equiv 0$ and $H$ is level-set convex in $p$ on each of the intervals $(-\infty,p_1),\ (p_1,p_2)$,\
$\dots,\ (p_{n},+\infty)$ can be handled similarly. 
\end{proof}

\begin{oss}
  Let $H,\,H_1,\dots,H_{n+1}$ be as above.  By the same
  reasoning as in the proof of Theorem \ref{teo main hom} and {by} induction, it can
be easily shown that the effective Hamiltonian $\overline H$
associated to $H$ satisfies
\[
 \overline H(\theta)=\min\{\overline H_1(\theta),\dots,\overline H_{n+1}(\theta)\}
 \qquad
 \hbox{for every $\theta\in\R$,}
\]
where $\overline H_i$ is the effective Hamiltonian associated to $H_i$, for each $i\in\{1,\dots,n+1\}$. 
\end{oss}

\bibliography{viscousHJ}
\bibliographystyle{siam}

\end{document}